\newtheorem{cor}{Corollary}[section]
\newtheorem{prop}[cor]{Proposition}
\numberwithin{equation}{section}
\newtheorem{thm}{Theorem}
\newtheorem{lem}{Lemma}
\numberwithin{equation}{section}
\begin{document}
\title
{The second-order $L^2$-flow of inextensible elastic curves with hinged ends in the plane} 
\author{Chun-Chi Lin\footnote{Department of Mathematics, National Taiwan Normal University, 116 Taipei, Taiwan; chunlin@math.ntnu.edu.tw} \footnote{National Center for Theoretical Sciences, Taipei Office.}, 
Yang-Kai Lue\footnote{Departement of Mathematics, National Tsing-Hua University, 300 Hsinchu, Taiwan; luf961@yahoo.com.tw}, 
Hartmut R. Schwetlick\footnote{Department of Mathematical Sciences, University of Bath, UK; schwetlick@maths.bath.ac.uk}
} 
\date{January 13, 2014}
\maketitle

\begin{abstract}
In the paper published in Duke Math. J. 1993,
Y. Wen studied a second-order parabolic equation for inextensible 
elastic \emph{closed} curves in $\mathbb{R}^{2}$ toward inextensible elasticae. 
In this article, we extend Wen's result to the case of open inextensible planar curves with hinged ends. 
We obtain the long time existence of smooth solutions when the initial curves fulfill certain regular conditions.  
\end{abstract}


\baselineskip=15 pt plus .5pt minus .5pt

\section{Introduction}

One of the simplest mathematical settings in elastic mechanics is the so-called Euler-Kirchhoff theory 
(e.g., see \cite{Antman05} or \cite{LangSing96}). 
The other example is the mechanics of ribbons and M\"{o}bius strips (e.g., see \cite{SH07}), 
which recently has attracted more and more attention. 
These simple mechanical models providing challenging mathematical problems, 
e.g., in the calculus of variations and related dynamical theory. 
One of the simpler dynamical theory is gradient flows motivated from geometric variational functionals. 
They are often called geometric flows and can also be viewed as over-damped dynamics of mechanical objects, 
when the geometric objects are related to mechanics.

Geometric flows for open curves associated with energy functional of higher-order derivatives 
have appeared in various research topics, for examples, 
higher-order variational problems in differential geometry 
(e.g., \cite{BG86}, \cite{Willmore00}) 
and geometric control theory 
(e.g., \cite{Jurdjevic97}); 
interpolation problems of curves in computer-aided geometric design 
(e.g., \cite{BW98}, \cite{GJ82}, \cite{Mumford94}); 
mechanical modeling of polymers 
(e.g., mechanical modeling of DNA molecules \cite{OSC04} and filaments in biological cells \cite{OS10}). 
These functionals are often related to certain Sobolev norms of the first-order 
(e.g., stretching energy) and second-order derivatives (e.g., bending energy) of curves. 
The long-term existence of solutions of the so-called curve-straightening flow, 
using min-max method, has been established in the literature 
(e.g., \cite{LangSing85} on planar or spatial curves and \cite{LangSing87} 
on curves in a Riemannian manifold).  
Their min-max method works for the case of either open or closed curves. 
On the contrary, in the parabolic PDE approach, 
the long-time existence of smooth solutions has also been established for closed curves, 
e.g., in \cite{DKS02}, \cite{Koiso96}, \cite{Polden96}, \cite{Wen95}. 
There are however very few papers discussing the case of open curves in the PDE setting until recent years. 
To the best of our knowledge, they include: 
the $4$th-order flow of elastic curves with a positive stretching coefficient and clamped ends in \cite{Lin12}; 
the $4$th-order flow of anisotropic elastic curves with a positive stretching coefficient and hinged ends in \cite{DP13}; 
the $4$th-order flow of elastic curves with fixed length and hinged ends in \cite{DLP13}; 
the $4$th-order flow of elastic inextensible planar curves with hinged ends or infinite length in \cite{NO14}. 

Let $f:I=[0,L]\rightarrow \mathbb{R}^{2}$ be an open planar curve, 
$L>0$ represent the total length of $f$, 
and $s\in I=[0,L]$ be the arclength parameter of $f$. 
The curve is still said to be open even if $f(0)=f(L)$, since there is no regular conditions assumed at end points 
(e.g., we don't assume $\frac{df}{ds}(0) = \frac{df}{ds}(L)$).  
Denote by 
$T=\frac{df}{ds}$ the tangent vector of $f$ 
and by $\kappa=\frac{d^{2}f}{ds^{2}}$ the curvature vector of $f$. 
The bending energy of planar curves is defined by 
\begin{equation}
\mathcal{E}[f]
=\underset{I}{\int }\text{ }
\frac{1}{2}\left|\kappa\right|^{2}\text{ }ds
\label{eq:energy-signcurv}
\end{equation}
A regular planar curve 
$f_{0}:I\rightarrow \mathbb{R}^{2}$ is said to be inextensible
if its deformations are restricted to the class 
\begin{align}
&\mathcal{D}_{f_{0}}= 
\notag
\\
&\left\{
f\in C^{\infty }(I\times (-1,1),\mathbb{R}^{2}): 
f(s,0)=f_{0}, \frac{\partial f}{\partial s}(s,\varepsilon )=1, 
\text{ }\forall\text{ } s\in I, \forall \text{ }\varepsilon\in(-1,1) 
\right\}
\text{.}
\notag
\end{align} 
Suppose $f(0)=p_{-}$, $f(L)=p_{+}$ 
and 
$L > |p_{+}- p_{-}| $. 

The family of inextensible planar curves 
with fixed length $L$ and fixed end points 
$p_{-}, p_{+}$ 
can be equivalently described by the family of tangent vectors  
$T:I\times(-1,1) \rightarrow \mathbb{S}^{1}\left(1\right)\subset\mathbb{R}^{2}$ 
fulfilling the constraint 
\begin{equation}
\underset{I}{\int }\text{ }T\left( s, \varepsilon\right) \text{ }ds 
=p_{+}-p_{-}=:\triangle p 
\label{eq:0th-db}
\end{equation}
for all fixed $\varepsilon\in(-1,1)$. 
The admissible set 
\begin{equation}
\begin{array}{l}
\mathcal{A}_{L, \triangle p}
=\left\{
T\in C^{\infty }\left( [0,L],\mathbb{S}^{1} 
\left( 1\right)\right) : 
T \text{ satisfies (\ref{eq:0th-db})} 
\right\} 
\end{array} 
\label{eq:admissible-set} 
\end{equation} 
gives a family of inextensible planar curves 
with fixed length $L$ and fixed end points 
$p_{-}, p_{+}$. 
Therefore, instead of working with planar curves and their functional 
$\mathcal{E}$, 
we may equivalently consider tangent indicatrices and their functional 
$\mathcal{F}_{L,\triangle p}: 
\mathcal{A}_{L, \triangle p}\rightarrow \mathbb{R}$, 
\begin{equation}
\mathcal{F}_{L,\triangle p}\left[ T \right] 
=\underset{I}{\int }\text{ } 
\frac{1}{2} |\partial_{s} T|^{2}\text{ } ds
+\vec\lambda \cdot \left[\underset{I}{\int }\text{ }T\text{ }ds-\triangle p \right]
\label{eq:energy-F[T]}
\end{equation}
where $\vec\lambda=(\lambda_{1},\lambda_{2})$ 
is the $\mathbb{R}^{2}$-valued Lagrange multiplier.

In this article, we investigate the parabolic equation of tangent vector $T$ 
\begin{equation}
\partial_{t}T= \nabla_{s}^{2} T -\langle\vec\lambda, T^{\perp}\rangle T^{\perp}  
\label{eq:flow-T}
\end{equation}
with hinged boundary condition 
\begin{equation} 
\kappa_{|\partial I}=0
\label{eq:bc-hinged}
\end{equation}
and certain regular initial data $T_0=T_{|t=0}$ (see Theorem \ref{thm:main} for details). 
The equation (\ref{eq:flow-T}) is the $L^2$-flow of 
$\mathcal{F}_{L,\triangle p}$ in the class $\mathcal{A}_{L,\triangle p}$. 
Our result extends the work of Y. Wen \cite{Wen93} from the case of closed planar curves 
to that of open planar curves with hinged boundary conditions and sufficiently smooth initial data. 
Note that the $L^2$-flow of $\mathcal{F}_{L,\triangle p}$ 
in this article is a second-order nonlinear parabolic partial differential equation, 
whose leading terms are however linear. 
The equation is the same as the one in \cite{Wen93}, 
but is different from the fourth-order quasilinear parabolic equations induced from 
the $L^2$-gradient flow of elastic curves discussed in \cite{DKS02}, \cite{Polden96}, \cite{Wen95}. 
The second-order parabolic equation demonstrates nice geometric properties during evolution: e.g., 
convexity-preserving, non-increasing of the number of inflection points 
(e.g., see \cite{Ang91}, \cite{Wen93}). 
These geometric properties are analogous to those of curve-shortening flow 
(e.g., see \cite{GH86}) and 
might be useful in applications. 
In contrast, Linn\'{e}r gave an example showing the failure of convexity-preserving 
during the evolution of the so-called curve-straightening flow of planar curves in 
\cite{Linner89}. 
To the best of the authors' knowledge, 
it is not clear yet whether or not these geometric properties hold 
for the fourth-order $L^2$-flow of elastic curves studied in, 
e.g., \cite{DKS02}, \cite{Lin12}, \cite{Polden96}, \cite{Wen95}.


The long time existence of smooth solutions for the $L^2$-flow in this article 
is derived by applying the Gagliardo-Nirenberg type inequalities 
in the estimates of $L^{2}$ norms of high-order derives of curvature. 
The main difficulty in extending the work of \cite{Wen93} 
to the case of open curves comes from the extra terms associated to the boundary conditions. 
These extra terms can't be taken care by 
the Gagliardo-Nirenberg type inequalities in the $L^{2}$-estimates. 
In \cite{Lin12}, we found that good boundary conditions give certain algebraic relationship to derive the ``higher-order energy identities", 
which allow us to obtain the uniform bounds of higher-order derivatives of curvature. 
Namely, as replacing the estimates of $\|\nabla_s^m\kappa\|_{L^2}$ by 
$\|\nabla_t^m f\|_{L^2}$, where $f$ is the position vector of curves, 
we avoid the estimates of boundary terms in the case of clamped ends. 
In this article, we however found that the approach through estimating terms 
$\|\nabla_s^m\kappa\|_{L^2}$ still works 
in the case of hinged boundary conditions 
as one carefully utilize 
the hinged boundary conditions for the parabolic equation (\ref{eq:flow-T}), 
see Lemma \ref{lem:nabla_s^2m(kappa)=0_at_bdry}. 
Besides, the flow with hinged ends is a very natural problem to study, 
both in mechanical terms and in a p.d.e. setting. 
On the other hand, in contrast to \cite{Lin12}, 
we are not able to obtain the long time existence of smooth solutions 
for the second-order $L^2$-flow with clamped boundary conditions in this article. 
In fact, if one chooses the approach in \cite{Lin12} to study the flow for inextensible planar elasticae with clamped ends, 
the term $\|\nabla_t^m f\|_{L^2}$ contains time-derivatives of Lagrange multipliers, 
which provide additional difficulties in applying the Gagliardo-Nirenberg type inequalities. 
We thus leave this case to the future work.

Below is the main result of this article. 

\begin{thm}
\label{thm:main}
For any given initial $C^\infty$-smooth and open inextensible planar curve 
$f_{0}:[0,L]\rightarrow\mathbb R^2$ with
$\kappa_0^{(\ell)}(0)=\kappa_0^{(\ell)}(L)=0$, 
$\forall\text{ }\ell\in\mathbb N\cup\{0\}$,  
there exists a $C^\infty$-smooth global solution of the $L^{2}$-flow 
(\ref{eq:flow-T}) with the hinged boundary conditions (\ref{eq:bc-hinged}) 
for the bending energy of curves. 
Moreover, the inextensible curves subconverge to an inextensible elastica, 
i.e., an equilibrium configuration of the functional 
$\mathcal{E}$ in the class $\mathcal{D}_{f_0}$ 
with the hinged boundary condition 
(\ref{eq:bc-hinged}) and fixed end points $f_0(0)$, $f_0(L)$.
\end{thm}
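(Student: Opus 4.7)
My plan is to follow the standard strategy for parabolic gradient flows: short-time existence, energy monotonicity as a Lyapunov functional, uniform-in-time higher-order a priori estimates, continuation to $[0,\infty)$, and subconvergence via Arzel\`a--Ascoli. To set up the PDE I would reformulate the flow in terms of the turning angle: writing $T(s,t)=(\cos\theta(s,t),\sin\theta(s,t))$ makes the unit-circle constraint automatic, reduces the curvature to the scalar $k=\partial_s\theta$, and turns (\ref{eq:bc-hinged}) into the Neumann condition $\partial_s\theta(0,t)=\partial_s\theta(L,t)=0$. Equation (\ref{eq:flow-T}) then becomes the scalar semilinear heat equation $\partial_t\theta=\partial_s^2\theta-\langle\vec\lambda(t),T^\perp\rangle$, in which $\vec\lambda(t)\in\mathbb{R}^2$ is determined instantaneously by the constraint: differentiating (\ref{eq:0th-db}) in time and inserting the PDE produces the $2\times 2$ algebraic system $\left(\int_I T^\perp\otimes T^\perp\,ds\right)\vec\lambda=\int_I(\partial_s^2\theta)\,T^\perp\,ds$, whose matrix is invertible because $T$ cannot be concentrated in a single direction (that would force $L=|p_+-p_-|$). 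Given the compatibility conditions $\kappa_0^{(\ell)}(0)=\kappa_0^{(\ell)}(L)=0$, Schauder theory for semilinear parabolic equations combined with a fixed-point iteration on $\vec\lambda$ yields a unique short-time smooth solution.

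Next, I would prove the energy identity $\frac{d}{dt}\mathcal{E}[f(t)]=-\int_I|\partial_tT|^2\,ds\le 0$, obtained by testing (\ref{eq:flow-T}) against $\partial_tT$, using $\partial_tT\perp T$ and the vanishing of the integration-by-parts boundary term due to $\kappa|_{\partial I}=0$. This yields both $\mathcal{E}[f(t)]\le\mathcal{E}[f_0]$ and $\int_0^\infty\|\partial_tT(\cdot,t)\|_{L^2}^2\,dt<\infty$, and hence uniform control of $\|\kappa(\cdot,t)\|_{L^2}$ in time.

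The heart of the proof, and the step I expect to be the main obstacle, is upgrading this to uniform-in-time bounds on $\|\nabla_s^m\kappa\|_{L^2}$ for every $m\ge 0$. I would argue by induction in $m$: differentiate $\frac12\int_I|\nabla_s^m\kappa|^2\,ds$ in time, commute $\partial_t$ past the spatial derivatives (producing expressions polynomial in $\kappa,\nabla_s\kappa,\ldots,\nabla_s^{m+2}\kappa$ and in $\vec\lambda$), integrate by parts, and absorb the resulting subcritical terms by Gagliardo--Nirenberg interpolation on the bounded interval $I$, using the uniform $L^2$-bound on $\kappa$ already secured. This mirrors the closed-curve scheme of Y. Wen; the genuinely new obstruction in the open case is the family of boundary contributions generated at each integration by parts. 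These are exactly killed by Lemma~\ref{lem:nabla_s^2m(kappa)=0_at_bdry}, which propagates $\kappa|_{\partial I}=0$ to $\nabla_s^{2m}\kappa|_{\partial I}=0$ for every $m$: one inducts in $m$ and uses (\ref{eq:flow-T}) at the boundary to trade each time-derivative of $\kappa$ (identically zero there, since $\kappa|_{\partial I}\equiv 0$ for all $t$) against two spatial $\nabla_s$'s, the remaining terms being proportional to $\kappa$ at $\partial I$ or to even-order derivatives whose vanishing is already known inductively. With these identities the boundary terms disappear and the Wen-type interpolation argument carries over. Uniform bounds on $\vec\lambda(t)$ and its time-derivatives follow in parallel from the nonlocal formula above together with the derived bounds on $\kappa$.

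Finally, these uniform higher-order estimates allow the short-time solution to be continued to all $t\ge 0$ by a standard continuation argument. For subconvergence, the dissipation identity furnishes a sequence $t_n\to\infty$ with $\|\partial_tT(\cdot,t_n)\|_{L^2}\to 0$; the uniform $C^\infty$-bounds combined with Arzel\`a--Ascoli yield a subsequence along which $T(\cdot,t_n)$ converges smoothly to some $T_\infty\in\mathcal{A}_{L,\triangle p}$ satisfying $\nabla_s^2T_\infty=\langle\vec\lambda_\infty,T_\infty^\perp\rangle T_\infty^\perp$ with $\kappa_\infty|_{\partial I}=0$, which is precisely the Euler--Lagrange equation of $\mathcal{F}_{L,\triangle p}$ with the hinged boundary condition. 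Integrating $T_\infty$ yields the limiting inextensible elastica with the prescribed end points $f_0(0)$ and $f_0(L)$.
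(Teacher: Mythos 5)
Your proposal follows essentially the same route as the paper: the turning-angle reformulation with Neumann boundary conditions, the $2\times 2$ algebraic system for $\vec\lambda$ made nondegenerate by $L>|p_+-p_-|$ (Lemma \ref{lem:lambda}), energy monotonicity, the propagation of $\kappa|_{\partial I}=0$ to $\nabla_s^{2m}\kappa|_{\partial I}=0$ (Lemma \ref{lem:nabla_s^2m(kappa)=0_at_bdry}), Gagliardo--Nirenberg induction on $\|\nabla_s^m\kappa\|_{L^2}$, continuation, and subconvergence via the dissipation identity. The only deviation is technical: the paper proves short-time existence by an explicit heat-kernel contraction argument after an even, $2L$-periodic extension of the initial angle (which is also how the boundary condition is verified), rather than by Schauder theory.
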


The rest of this article is organized as the following. 
In Section 2, we derive the Euler-Lagrange equation and set up 
the second-order parabolic equation of the $L^2$-flow (\ref{eq:flow-T}). 
The proof of our main result, Theorem \ref{thm:main}, is contained in Section 3, 
where the part of short time existence of $C^\infty$-smooth solutions is a bit lengthy 
because it is presented by an elementary argument. 

\section{The $L^{2}$-flow equation}

The first variation of $\mathcal{F}_{L,\triangle p}\left[ T \right]$ in the class $\mathcal{A}_{L,\triangle p}$ gives 
\begin{equation*}
\delta \mathcal{F}_{L,\triangle p}\left[ T \right] 
= \langle \kappa, (\partial_{\varepsilon}T)_{|\varepsilon=0} \rangle_{|\partial I}
- 
\underset{I}{\int }
\langle \partial_{s}\kappa -\vec\lambda, 
(\partial_{\varepsilon}T)_{|\varepsilon=0} \rangle \text{ } ds 
\text{,}
\end{equation*}
where $\kappa=\partial_{s} T$ is the curvature vector of a planar curve $f$. 
Since $|T|\equiv1$ implies $\partial_{\varepsilon} T \perp T$, 
we rewrite this equation as 
\begin{equation}
\delta \mathcal{F}_{L,\triangle p}\left[ T \right] 
= 
- \underset{I}{\int }
\langle \nabla_{s}\kappa -\langle\vec\lambda, T^{\perp}\rangle T^{\perp}, 
(\partial_{\varepsilon}T)_{|\varepsilon=0} \rangle \text{ } ds 
\text{,}
\label{eq:1st-var-2}
\end{equation}
where $\nabla_{s} g:=\langle \partial_{s} g, T^{\perp}\rangle T^{\perp}$ 
and $T^{\perp}$ is a unit normal vector of the curve $f$ 
derived from rotating its unit tangent vector $T$ counterclockwise. 
There is no boundary term in (\ref{eq:1st-var-2}), because of the hinged boundary conditions. 
By assuming that $T$ is a critical point of $\mathcal{F}_{L,\triangle p}$ 
in $\mathcal{A}_{L, \triangle p}$, 
we obtain the Euler-Lagrange equation of $T$, 
\begin{equation}
\nabla_{s}^{2} T -\langle\vec\lambda, T^{\perp}\rangle T^{\perp} =0 
\text{.}
\label{eq:EL-T}
\end{equation}


From (\ref{eq:1st-var-2}), (\ref{eq:flow-T}) and (\ref{eq:bc-hinged}), 
one obtains the energy identity 
\begin{equation}
\frac{d}{d t} \mathcal{F}_{L,\triangle p}\left[ T \right] 
=- \underset{I}{\int }| \partial_{t}T |^{2} \text{ } ds 
\text{.}
\label{eq:energy_ID-1}
\end{equation}
It implies the non-increasing property of $\mathcal{F}_{L,\triangle p}[T_{t}]$ 
\begin{equation}
\mathcal{F}_{L,\triangle p}[T_{t}]\leq \mathcal{F}_{L,\triangle p}[T_{0}] 
\text{, } \forall\text{ } t \in (0, t_0) 
\label{eq:curv-esti-0th}
\end{equation} 
as the smooth solutions of the $L^2$-flow (\ref{eq:flow-T}) exist 
$\forall \text{ } t\in (0,t_0)$. 
Note that, in (\ref{eq:flow-T}), one may write 
\[
\langle \vec\lambda, T^{\perp} \rangle \text{ } T^{\perp}
= \vec\lambda \cdot [(T^{\perp})^{t} T^{\perp}] 
\text{,}
\]  
where $[(T^{\perp})^{t} T^{\perp}] \in\mathbb{M}_{2\times 2}$ 
(the set of $2\times 2$ matrices). 
Therefore, from the constraint  
\begin{equation*}
0=\frac{d}{dt}\underset{I}{\int}\text{ }T\text{ }ds
= \underset{I}{\int }\text{ }\partial_{t}T\text{ }ds 
\end{equation*}
and (\ref{eq:flow-T}), 
one has 
\begin{equation*}
\underset{I}{\int} \text{ } \nabla_{s}^{2} T \text{ }ds 
=\vec\lambda\cdot \underset{I}{\int}\text{ }[(T^{\perp})^{t} T^{\perp}] \text{ }ds 
=\vec\lambda\cdot A_T 
\end{equation*}
with  
\begin{equation}
A_T
:= \underset{I}{\int}\text{ }[(T^{\perp})^{t} T^{\perp}] \text{ }ds 
\text{.}
\label{eq:A}
\end{equation}
If $\det A_T\ne 0$, the vector-valued Lagrange multiplier $\vec\lambda$ can be written as 
\begin{equation}
\vec\lambda 
= 
\left(
\underset{I}{\int}\text{ }\nabla_{s}^{2} T \text{ }ds 
\right) 
\cdot 
A_T^{-1}
\text{.}
\label{eq:lambda-T}
\end{equation}
By applying the hinged boundary condition (\ref{eq:bc-hinged}) 
and integration by parts, 
the vector-valued Lagrange multiplier $\vec\lambda$ can be rewritten as  
\begin{equation}
\vec\lambda 
= 
\left(\underset{I}{\int}\text{ } |\kappa|^{2} \text{ }T \text{ }ds \right) 
\cdot 
A_T^{-1}
\text{.}
\label{eq:lambda-T-hingedBC}
\end{equation} 
From the evolution equation of the tangent vector in (\ref{eq:flow-T}) 
and the property $\nabla_s T^{\perp}=0$, 
we obtain 
\[
\nabla_{t} \kappa 
=[\partial_t \kappa]^\perp 
=[\partial_t \partial_s T]^\perp
=[\partial_s \partial_t T]^\perp
=[\partial_s (\nabla_s \kappa -\langle \vec\lambda, T^\perp \rangle T^\perp)]^\perp 
\text{,}
\]
which gives  
\begin{equation}
\nabla_{t}\kappa= \nabla_{s}^{2} \kappa + \langle\vec\lambda, T\rangle \kappa  
\text{.}
\label{eq:flow-kappa}
\end{equation}

By introducing the tangent indicatrix $T_\varphi$,
\begin{equation}
T=T_\varphi=\left( \cos \varphi ,\sin \varphi \right)
\text{,}
\label{eq:T=(cos,sin)} 
\end{equation}
where $\varphi:I\times[0,t_1)\rightarrow\mathbb R$, 
we can transfer the evolution equation (\ref{eq:flow-T}) into a scalar equation,  
\begin{equation*}
(\partial_{t}\varphi - \partial_{s}^{2}\varphi 
-\lambda_{1}\sin \varphi +\lambda_{2}\cos \varphi ) \cdot T^{\perp}
=0 
\text{,} 
\end{equation*} 
or equivalently 
\begin{equation}
\partial_{t}\varphi 
=\partial_{s}^{2}\varphi 
- \langle \vec\lambda, T^{\perp} \rangle 
\text{.}  
\label{eq:flow-phi}
\end{equation} 
Similarly, 
the evolution equation for curvature vector $\kappa$ in (\ref{eq:flow-kappa}) 
can be written in terms of the signed curvature $k=\partial_s \varphi$ as 
\begin{equation}
\partial _{t}k=\partial_{s}^{2} k + \langle \vec\lambda, T \rangle k 
\text{.}  
\label{eq:flow-k}
\end{equation} 
Using the expression of tangent indicatrix $T$ in (\ref{eq:T=(cos,sin)}), 
(\ref{eq:A}) yields 
\begin{equation}
A_T 
=\left(
\begin{array}{cc}
\underset{I}\int \text{ }\sin^2\text{}\varphi\text{ }ds & -\underset{I}\int \text{ }\sin\text{}\varphi\cos\text{}\varphi\text{ }ds 
\\
-\underset{I}\int \text{ }\sin\text{}\varphi\cos\text{}\varphi\text{ }ds & \underset{I}\int \text{ }\cos^2\text{}\varphi\text{ }ds
\end{array}
\right) 
\text{,}
\label{eq:matrix_A}
\end{equation}
and hence 
\begin{equation}
\det A_T=
\left( \underset{I}{\int}\text{ }\cos^{2}\varphi \text{ }ds\right)
\cdot \left( \underset{I}{\int}\text{ }\sin^{2}\varphi \text{ }ds\right) 
-\left( 
\underset{I}{\int}\text{ }\cos \varphi \sin \varphi \text{ }ds
\right)^{2}
\text{,}
\label{eq:lambda-phi-Det}
\end{equation}
which is non-negative by Cauchy-Schwartz inequality. 
Moreover, 
from (\ref{eq:lambda-T-hingedBC}), 
the Lagrange multipliers in the case of hinged boundary condition 
can be expressed as 
\begin{equation}
\left\{ 
\begin{array}{l}
\lambda_1
=
\left(\det A_T \right)^{-1}
\cdot 
\left[
\left( \underset{I}{\int}\text{ }(\partial _{s}\varphi)^2 \cos \varphi 
\text{ }
ds\right) 
\cdot \left( \underset{I}{\int}\text{ }\cos^{2}\varphi \text{ }ds\right)
\right.
\\
\left.
\text{ \  \  \  \  \  \  \  \  \  \  \  \  \  \  \  \  \  \  \  \  \  \  \  \  \  \  \  \  \  \  \  \ }
+ \left( \underset{I}{\int }\text{ }
(\partial _{s}\varphi)^2 \sin \varphi \text{ }
ds\right) \cdot \left( \underset{I}{\int}\text{ }\sin \varphi \cos \varphi 
\text{ }
ds\right)
\right] 
\text{,} 
\\ \\ 
\lambda _2
=
\left(\det A_T \right)^{-1}
\cdot
\left[
\left( \underset{I}{\int}\text{ }(\partial _{s}\varphi)^2 \cos \varphi 
\text{ }ds\right) \cdot \left( \underset{I}{\int}\text{ }\sin \varphi \cos \varphi 
\text{ }ds\right) 
\right.
\\ 
\left.
\text{ \  \  \  \  \  \  \  \  \  \  \  \  \  \  \  \  \  \  \  \  \  \  \  \  \  \  \  \  \  \  \  \ }
+ \left( \underset{I}{\int}\text{ } (\partial _{s}\varphi)^2 \sin \varphi 
\text{ }ds\right) 
\cdot \left( \underset{I}{\int}\text{ }\sin^{2}\varphi 
\text{ }ds \right) 
\right]
\text{.}
\end{array}
\right. 
\label{eq:lambda-phi-hingedBC}
\end{equation}

\section{The existence of global smooth solutions} 

In this section, we prove long time existence of smooth solutions. 
The key step is to show that 
$\left\Vert \partial_s^m\kappa \right\Vert_{L^2}$ 
remains uniformly bounded during the geometric flow (\ref{eq:flow-T}) 
for any $m\in\mathbb N$.

\subsection{Some preliminary estimates} 

The following lemma gives a crucial estimate on the Lagrange multipliers.

\begin{lem}
\label{lem:lambda} 
Let $T\in \mathcal{A}_{L, \triangle p}$, 
where $\triangle L= L-|\triangle p|>0$ and 
$\left\| \partial_s T \right\| _{L^{2}}\leq M$. 
Suppose $T$ satisfies the hinged boundary condition, i.e., $\partial_s T=0$ on the boundary $\partial I$. 
Then, there exist positive numbers 
$C_1=C_1\left( \triangle L, L, M\right)$ and $C_2=C_2\left( \triangle L, L, M\right)$ such that 
$(i)$ $C_1\leq \det A_T$; $(ii)$ $|\vec\lambda|\leq C_2$. 
\end{lem}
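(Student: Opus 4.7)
For part (i), the plan is to bound $\det A_T$ from below by first showing $\det A_T>0$ on the admissible class pointwise, and then upgrading this to a uniform bound via compactness. Expanding the $2\times 2$ determinant in (\ref{eq:lambda-phi-Det}) and symmetrizing yields the identity
\[
\det A_T \;=\; \tfrac{1}{2}\iint_{I\times I}\sin^{2}\bigl(\varphi(s)-\varphi(s')\bigr)\,ds\,ds'\,,
\]
so $\det A_T=0$ would force $\varphi(s)-\varphi(s')\in\pi\mathbb{Z}$ for a.e.\ $(s,s')$. Since $\|\partial_s T\|_{L^2}\le M$ gives, by the one-dimensional Sobolev embedding, a continuous lift $\varphi$ of class $C^{1/2}$, such $\varphi$ must be constant, making $T$ a constant unit vector and forcing $|\triangle p|=L$, contradicting $\triangle L>0$. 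To upgrade to a uniform bound, I will consider
\[
\mathcal{K}\;=\;\{\,T\in H^1(I;\mathbb{S}^1)\,:\,\|\partial_s T\|_{L^2}\le M,\ \int_I T\,ds=\triangle p\,\},
\]
which is weakly compact in $H^1$ and, by the compact embedding $H^1(I)\hookrightarrow C^0(I)$, sequentially compact in $C^0$. The functional $T\mapsto\det A_T$ is continuous with respect to strong $L^2$ convergence, so it attains a strictly positive minimum $C_1$ on $\mathcal{K}$, depending only on $L,M,\triangle L$ by rotation invariance of both $\det A_T$ and the class $\mathcal{K}$.

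Part (ii) will then follow directly from (i) and the hinged-boundary formula (\ref{eq:lambda-T-hingedBC}). I will estimate the numerator by $\bigl|\int_I |\kappa|^2 T\,ds\bigr|\le\|\kappa\|_{L^2}^{2}\le M^2$, and use that $A_T$ is a symmetric positive matrix with $\operatorname{tr}A_T=L$, so $\lambda_{\min}(A_T)\ge \det A_T/\lambda_{\max}(A_T)\ge C_1/L$ and thus $\|A_T^{-1}\|\le L/C_1$. Combining yields $|\vec\lambda|\le M^2 L/C_1=:C_2$.

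The main obstacle is the strict positivity in (i). The subtlety is that $\det A_T$ is sensitive to $\int_I e^{2i\varphi}\,ds$ rather than to $\int_I e^{i\varphi}\,ds=\triangle p$: an alternative computation gives $\det A_T=\tfrac{1}{4}(L^2-|\int_I e^{2i\varphi}|^2)$, and $|\int e^{2i\varphi}|$ could in principle be close to $L$ even when $|\int e^{i\varphi}|$ is bounded away from $L$---namely if $\varphi$ clustered near two antipodal angles $c$ and $c+\pi$. The $H^1$-bound on $T$ rules this out, since any transition of $\varphi$ across a value $c+(k+\tfrac12)\pi$ keeps $|\sin(\varphi-c)|$ above $1/\sqrt{2}$ on an interval of length at least $\pi^2/(8M^2)$ by the $C^{1/2}$ modulus of $\varphi$, which is incompatible with $\int\sin^2(\varphi-c)\,ds$ being arbitrarily small. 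The compactness argument above sidesteps these quantitative details at the price of an implicit constant $C_1$.
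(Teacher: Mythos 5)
Your proof is correct, and it shares the paper's starting point: the symmetrized identity $2\det A_T=\iint_{I\times I}\sin^2\bigl(\varphi(s)-\varphi(\sigma)\bigr)\,ds\,d\sigma$ together with the $C^{0,1/2}$ modulus of $\varphi$ coming from $\|\partial_s T\|_{L^2}\le M$ via Morrey's inequality. Where you diverge is in how the positivity of $\det A_T$ is made uniform. The paper argues quantitatively: after rotating so that $\triangle p=(|\triangle p|,0)$, the constraints $\int_I\sin\varphi\,ds=0$ and $\int_I(1-\cos\varphi)\,ds=\triangle L$ produce, by the mean value theorem, two points $s_\pm$ at which the values of $\varphi$ differ from each other by an amount bounded away from $\pi\mathbb{Z}$ by $|\triangle\varphi|\ge\arccos(1-\triangle L/L)$; the H\"older modulus spreads this over two intervals of explicit length $\delta_0=\bigl(|\triangle\varphi|/(3C_0M)\bigr)^2$, yielding the explicit constant $C_1=\tfrac12\bigl(\delta_0\sin(|\triangle\varphi|/3)\bigr)^2$. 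You instead prove strict positivity pointwise (constancy of $\varphi$ modulo $\pi$, hence constancy of $\varphi$ by continuity, would force $|\triangle p|=L$) and upgrade it by minimizing the $C^0$-continuous functional $T\mapsto\det A_T$ over the weakly $H^1$-compact class $\mathcal{K}$, using rotation invariance to see that the minimum depends only on $(L,|\triangle p|,M)$. Your route is shorter and avoids the bookkeeping with $s_\pm$ and $\delta_0$, at the price of a non-effective constant; since the lemma only asserts existence of $C_1$, that is acceptable. Your argument for $(ii)$, using $\operatorname{tr}A_T=L$ and $\lambda_{\min}(A_T)\ge\det A_T/L$ to get $|\vec\lambda|\le M^2L/C_1$, is in fact more detailed than the paper's one-line remark and is correct. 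The only point worth stating explicitly in your write-up is that the compactness minimizer lives in the $H^1$-closure of $\mathcal{A}_{L,\triangle p}$ rather than in $\mathcal{A}_{L,\triangle p}$ itself; this is harmless, as the infimum over the larger class still bounds $\det A_T$ from below for every smooth admissible $T$.
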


\begin{proof}
Observe that 
\begin{equation*}
\begin{array}{l}
2 \cdot \det A_T 
\\ 
=\left(\underset{ I}{\int}\text{ }\cos^{2}\varphi (s)\text{ }ds \right)
\cdot 
\left(\underset{ I}{\int}\text{ }\sin^{2}\varphi (\sigma )\text{ }d\sigma \right)
\\
\text{ \ }
-2\left(
\underset{I}{\int}\text{ }\cos \varphi (s) \sin \varphi (s)\text{ }ds
\right)
\cdot 
\left(
\underset{I}{\int}\text{ } \cos \varphi (\sigma )\sin \varphi (\sigma )
\text{ }d\sigma 
\right) 
\\ 
\text{ \ }
+
\left(
\underset{I}{\int}\text{ }\cos^{2}\varphi (\sigma )
\text{ }d\sigma 
\right)
\cdot 
\left(
\underset{I}{\int}\text{ }\sin^{2}\varphi (s)\text{ }ds
\right)
 \\ \\
=\underset{I\times I}{\iint }
\left[
\cos^{2}\varphi (s)
\sin^{2}\varphi (\sigma)
-2\cos \varphi (s)\sin \varphi (s)
\cos \varphi (\sigma)\sin \varphi (\sigma)
\right.
\\ 
\text{ \ }
\left. 
\text{ \  } \text{ \  } \text{ \  } \text{ \  }
+\cos^{2}\varphi (\sigma )\sin^{2}\varphi (s)
\right]
\text{ }d\sigma ds 
\\ \\
=\underset{I\times I}{\iint}[\cos \varphi (s)\sin \varphi
(\sigma )-\cos \varphi (\sigma )\sin \varphi (s)]^{2}\text{ }d\sigma ds 
\\ \\
=\underset{I\times I}{\iint}\sin ^{2}
\left[ \varphi (\sigma)-\varphi (s) \right]
\text{ }d\sigma ds
\geq 0
\text{.}
\end{array}
\end{equation*}
Since $\varphi\in W^{1,2}(I)$, 
by Morrey's inequality 
(cf. \cite{Evans92}),
there exists a 
constant $C_0>0$, independent of $\varphi$ and $|I|$,  
such that $\varphi \in C^{0,1/2}(I)$ and 
\begin{equation}
|\varphi \left( x\right) -\varphi \left( y\right) |
\leq C_0 \cdot 
\left(
\underset{I} {\int}\text{ }|\partial _{s}\varphi |^{2}\text{ }ds
\right)^{1/2}
\cdot |x-y|^{1/2}
\label{eq:m-s}
\end{equation}
for any $x$, $y\in I$. 
By rotations of the coordinate of $\mathbb R^2$, where the planar curves sit in, 
we may assume  
\begin{equation}
\underset{I}{\int }
\left(
\cos \varphi \left( s\right) ,\sin \varphi \left(s\right) 
\right)
\text{}ds
=(|\triangle p|,0)
\text{,}
\label{eq:delta-p}
\end{equation}
which implies 
\begin{equation}
\underset{I}{\int }
\left[1-\cos \varphi \left( s\right) \right]
\text{ }ds
=\triangle L>0 
\text{.}
\label{eq:delta_L}
\end{equation}
Thus, from applying (\ref{eq:delta-p}), (\ref{eq:delta_L}) 
and the mean value theorem, 
there exist $s_{-}, s_{+}\in I$ such that 
$\sin\varphi(s_{-})=0$
and 
$1-\cos\varphi(s_{+})=\frac{\triangle L}{L}\in(0,1)$.  
Let 
\begin{equation}
\left\{ 
\begin{array}{l}
\varphi_{-}:=\varphi(s_{-})\in\{n\pi: n\in\mathbb Z\}
\text{,}
\\ 
\varphi_{+}:=\varphi(s_{+})
=\arccos(1-\triangle L/L)\notin \{n\pi: n\in\mathbb Z\}
\text{,}
\end{array}
\right.  
\label{eq:phi_{+,-}}
\end{equation}
and  
\begin{equation}
|\triangle \varphi|:=|\varphi_{+}-\varphi_{-}|\ge |\arccos(1-\triangle L/L)| 
\in (0, \pi/2) 
\text{.}
\label{eq:delta-varphi}
\end{equation}

Choose 
\[
\delta_0=\delta_0 (\triangle L, L, M)
:=\left(\frac{|\triangle\varphi|}{3C_0 M}\right)^2
\]
and 
\[
J_{s}(r):=\left\{\sigma\in \mathbb{R}: |\sigma-s |\le r \right\}
\cap I
\text{.}
\] 
Note that  
$|J_{s}(r)|\ge r$, as $0<r\le |I|$. 
By applying (\ref{eq:m-s}), 
we have 
\begin{equation}
\begin{array}{l}
|\varphi (s)-\varphi (s_\pm)| 
\leq 
\frac{|\triangle \varphi|}{3}
\text{ , } \forall \text{ }
s\in J_{s_\pm} ( \delta_0 ) 
\text{ , } \forall \text{ } \pm\in\{-,+\}
\text{.}
\end{array}
\label{eq:tri-ineq_+-}
\end{equation} 
From (\ref{eq:tri-ineq_+-}), we conclude 
\begin{equation}
|\varphi (\sigma_1)-\varphi (\sigma_2 )| 
\ge \frac{|\triangle \varphi|}{3} 
\text{, }\forall \text{ } 
\sigma_1\in J_{s_-}(\delta_0)
\text{, }\forall \text{ } 
\sigma_2 \in J_{s_+}(\delta_0) 
\text{.}
\label{eq:lower_bdd_phi}
\end{equation} 
Therefore, 
\begin{eqnarray*}
\det A_T
&\ge&
\frac{1}{2}\underset{J_{s_-}(\delta_0)\times J_{s_+}(\delta_0)}
{\int}\text{ }
[\sin (\varphi (\sigma_1)-\varphi (\sigma_2))]^2 
\text{ }d\sigma_1 d\sigma_2 
\\ 
&\ge& \frac{1}{2}\left(\sin \frac{|\triangle \varphi|}{3}\right)^2\left| J_{s_-}(\delta_0)\times J_{s_+}(\delta_0)\right| 
\\
&\ge & \frac{1}{2}\left(\delta_0 \cdot \sin \frac{|\triangle \varphi|}{3}\right)^2 
=:C_1=C_1 (\triangle L, L, M) > 0 
\text{.}
\end{eqnarray*}
This completes the proof of $(i)$. 
The conclusion $(ii)$ is a consequence of applying (\ref{eq:lambda-T-hingedBC}) 
and result $(i)$. 
\end{proof}

To simplify the presentation, we define some notation.   
For normal vector fields $\phi _{1},\cdot \cdot \cdot ,\phi _{\ell}$ 
along a curve $f$,
we denote by $\phi _{1}\ast \ast \ast \phi _{\ell}$ a term of the type
\begin{equation*}
\phi _{1}\ast \ast \ast \phi _{\ell}=\left\{
\begin{array}{l}
\langle \phi _{i_{1}},\phi _{i_{2}}\rangle \cdot \cdot \cdot \langle \phi
_{i_{\ell-1}},\phi _{i_{\ell}}\rangle \text{ \ \ \ \ \ \ \ , for }\ell\text{\ even,} 
\\
\langle \phi _{i_{1}},\phi _{i_{2}}\rangle \cdot \cdot \cdot \langle \phi
_{i_{\ell-2}},\phi _{i_{\ell-1}}\rangle \cdot \phi _{i_{\ell}}\text{, for }
\ell\text{\
odd,}
\end{array}
\right.
\end{equation*}
where $(i_{1},...,i_{\ell})$ is any permutation of $(1,..., \ell)$. 
Slightly more general, we allow some of the $\phi _{i}$ to be
functions, in which case the $\ast $-product reduces to a multiplication. 
Denote by $P_{b}^{a,c}(\phi)$ any linear combination of terms of the type 
\[
\nabla_{s}^{i_1} \phi \ast \dots \ast \nabla_{s}^{i_{b}} \phi,  
\text{ }\mbox{}i_1 + \dots +i_{b}= a \mbox{ with } \max i_{j} \leq c 
\text{,}
\] 
where all coefficients are bounded from above and below 
by some universal constants, depending only on $a$ and $b$. 
Moreover, let 
\begin{equation}
\sum_{\substack{[\![a,b]\!] \leq [\![A,B]\!]\\c\leq C}} P^{a,c}_{b} (\kappa) 
: = \sum_{a=0}^{A} \sum_{b=1}^{2A+B-2a} \sum_{c=0}^{C}  P^{a,c}_{b}(\kappa) 
\label{eq:sum_P^a_b}
\text{,}
\end{equation} 
where $[\![a,b]\!]:=2a+b$.

\begin{lem}
\label{lem:Lin_Lemma8-like} 
Suppose 
$T: I\times[0,t_1) \rightarrow \mathbb{R}^{2}\cap\mathbb{S}^1$ 
is a smooth solution of 
(\ref{eq:flow-T}). 
Let 
$\phi_{\ell}:=\nabla_{s}^{\ell} \kappa$. 
Denote by $\psi:I\times[0,t_1)\rightarrow \mathbb{R}^{2}$ 
a smooth normal vector field along the planar curve, 
i.e., $\psi(s,t)\perp T(s,t)$, for all $(s,t)\in I\times [0,t_1)$. 
Then, for any integer $\ell\ge2$ and $k, m\in\mathbb N$, 
we have the following formulae, 
\begin{align}
\nabla_{t}\nabla_{s} \psi= \nabla_{s}\nabla_{t} \psi 
\text{,}
\label{eq:Lin_Lemma8-like-1}
\end{align}
\begin{align}
\partial_s \psi=\nabla_{s} \psi -\langle \psi, \kappa\rangle  T 
\text{,}
\label{eq:Lin_Lemma8-like-2}
\end{align}
\begin{align}
\partial_{s}^{\ell}\kappa - \nabla_{s}^{\ell} \kappa 
=
\left[\sum_{\substack{[\![a,b]\!] \leq [\![\ell-1,2]\!]\\ c\leq \ell-1}} P^{a,c}_{b} (\kappa) \right]  T 
+\sum_{\substack{[\![a,b]\!] \leq [\![\ell-2,3]\!]\\ c\leq \ell-2}} P^{a,c}_{b} (\kappa) 
\text{,}
\label{eq:Lin_Lemma8-like-4}
\end{align}
where the coefficient of $T$ in (\ref{eq:Lin_Lemma8-like-4}), 
i.e., the one denoted by $[\cdots]$, is a sum of terms like  
$\langle \phi _{i_{1}},\phi _{i_{2}}\rangle \cdot \cdot \cdot 
\langle \phi_{i_{j-1}},\phi _{i_{j}}\rangle $; 
while the last term in (\ref{eq:Lin_Lemma8-like-4}) is a sum of terms of the form  
$\langle \phi _{i_{1}},\phi _{i_{2}}\rangle \cdot \cdot \cdot 
\langle \phi_{i_{k-2}},\phi _{i_{k-1}}\rangle \cdot \phi _{i_{k}}$. 
\end{lem}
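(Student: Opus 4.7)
The plan is to prove the three identities in the order stated, since (\ref{eq:Lin_Lemma8-like-2}) is the workhorse that powers the other two.

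First I would verify (\ref{eq:Lin_Lemma8-like-2}). Since $|T|\equiv 1$, differentiating gives $\langle T,\partial_{s}T\rangle=0$, so $\partial_{s}T=\kappa$ is itself normal. For the normal field $\psi$, the identity $\langle\psi,T\rangle\equiv 0$ yields $\langle\partial_{s}\psi,T\rangle=-\langle\psi,\partial_{s}T\rangle=-\langle\psi,\kappa\rangle$; hence the tangential component of $\partial_{s}\psi$ is $-\langle\psi,\kappa\rangle T$, while the normal component is $\nabla_{s}\psi$ by the very definition $\nabla_{s}g=\langle\partial_{s}g,T^{\perp}\rangle T^{\perp}$. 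Summing the two components gives (\ref{eq:Lin_Lemma8-like-2}).

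Next, for (\ref{eq:Lin_Lemma8-like-1}), the right-hand side of the flow equation (\ref{eq:flow-T}) is a sum of normal fields, so $\partial_{t}T=\mu T^{\perp}$ for some scalar $\mu(s,t)$. Writing $\psi=\alpha T^{\perp}$ with scalar $\alpha$, and using $\partial_{s}T^{\perp}=-kT$ and $\partial_{t}T^{\perp}=-\mu T$ (both tangential), projection onto $T^{\perp}$ gives $\nabla_{s}\psi=(\partial_{s}\alpha)T^{\perp}$ and $\nabla_{t}\psi=(\partial_{t}\alpha)T^{\perp}$. Hence $\nabla_{t}\nabla_{s}\psi=(\partial_{t}\partial_{s}\alpha)T^{\perp}=(\partial_{s}\partial_{t}\alpha)T^{\perp}=\nabla_{s}\nabla_{t}\psi$ by equality of mixed partials. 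This reflects the flatness of the one-dimensional normal bundle of a planar curve.

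For (\ref{eq:Lin_Lemma8-like-4}) I would induct on $\ell$. The base case $\ell=2$ follows from applying (\ref{eq:Lin_Lemma8-like-2}) with $\psi=\kappa$ to obtain $\partial_{s}\kappa=\nabla_{s}\kappa-|\kappa|^{2}T$, and then differentiating once more using $\partial_{s}T=\kappa$; after collecting, $\partial_{s}^{2}\kappa-\nabla_{s}^{2}\kappa=-3\langle\nabla_{s}\kappa,\kappa\rangle T-|\kappa|^{2}\kappa$, whose $T$-coefficient has $[\![a,b]\!]=[\![1,2]\!]$ and $c=1$, and whose extra normal part has $[\![a,b]\!]=[\![0,3]\!]$ and $c=0$, matching the prescribed bounds. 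In the inductive step I would apply $\partial_{s}$ term-by-term to the $\ell$-formula, using $\partial_{s}\nabla_{s}^{\ell}\kappa=\nabla_{s}^{\ell+1}\kappa-\langle\nabla_{s}^{\ell}\kappa,\kappa\rangle T$, $\partial_{s}(AT)=(\partial_{s}A)T+A\kappa$ for scalars $A$, and $\partial_{s}P=\nabla_{s}P-\langle P,\kappa\rangle T$ for normal $P$, and differentiating each inner-product factor $\langle\phi_{i},\phi_{j}\rangle$ by the Leibniz rule (noting that inner products of $T$ with normal fields vanish, so the correction terms from (\ref{eq:Lin_Lemma8-like-2}) drop out inside $\langle\cdot,\cdot\rangle$). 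Each such operation either raises the total derivative count $a$ by one (so $[\![a,b]\!]$ by two, $c$ by at most one) or raises $b$ by one while producing an extra $T$ factor, which migrates the term between the two buckets; a direct check then confirms that the $T$-bucket stays inside $[\![a,b]\!]\leq[\![\ell,2]\!]$ with $c\leq\ell$, and the normal bucket inside $[\![a,b]\!]\leq[\![\ell-1,3]\!]$ with $c\leq\ell-1$.

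The main obstacle is the bookkeeping in the inductive step for (\ref{eq:Lin_Lemma8-like-4}): every application of $\partial_{s}$ produces several sub-terms, and one must keep track separately of the tangential portion (even $b$, carrying the $T$) and the normal portion (odd $b$), verifying in each subcase that the index shifts leave the term inside the declared family $P^{a,c}_{b}(\kappa)$. The first two identities are essentially Frenet-frame computations and pose no serious difficulty.
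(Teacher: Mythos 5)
Your proposal is correct and follows essentially the same route as the paper: the normal/tangential decomposition for (\ref{eq:Lin_Lemma8-like-2}), the scalar representation $\psi=\alpha T^{\perp}$ with equality of mixed partials for (\ref{eq:Lin_Lemma8-like-1}), and induction on $\ell$ starting from $\partial_{s}^{2}\kappa=\nabla_{s}^{2}\kappa-3\langle\nabla_{s}\kappa,\kappa\rangle T-|\kappa|^{2}\kappa$ for (\ref{eq:Lin_Lemma8-like-4}). You in fact supply more detail on the inductive bookkeeping than the paper, which leaves that step to the reader.
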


\begin{proof}

$(i)$ 
We obtain (\ref{eq:Lin_Lemma8-like-1}) by applying the property: 
if $\psi=\varphi \cdot T^\perp$, then
$\nabla_s \psi=\partial_s \varphi \cdot T^\perp$ and 
$\nabla_t \psi=\partial_t \varphi \cdot T^\perp$. 

$(ii)$ 
We derive (\ref{eq:Lin_Lemma8-like-2}) from  
\[
\partial_s \psi=\nabla_{s} \psi +\langle \partial_s\psi, T\rangle  T
=\nabla_{s} \psi -\langle \psi, \kappa\rangle  T 
\text{.}
\]

$(iii)$ 
The proof of (\ref{eq:Lin_Lemma8-like-4}) is an induction argument. 
From (\ref{eq:Lin_Lemma8-like-2}), we have 
$\partial_{s} \kappa = \nabla_s \kappa - |\kappa|^2 T$. 
Thus, by applying (\ref{eq:Lin_Lemma8-like-2}) again, we have 
$\partial_{s}^2 \kappa = \partial_s (\nabla_s \kappa - |\kappa|^2 T)
=\nabla_s^2 \kappa - 3\langle \nabla_s\kappa, \kappa\rangle T -  |\kappa|^2 \cdot \kappa $, 
which proves the case of $\ell=2$ in (\ref{eq:Lin_Lemma8-like-4}). 
For $\ell\ge3$, we apply (\ref{eq:Lin_Lemma8-like-2}) and prove (\ref{eq:Lin_Lemma8-like-4}) easily by induction. 
Thus, we leave it to the reader to verify.

\end{proof}

We recall the Gagliardo-Nirenberg type interpolation
inequalities from Proposition 2.5 in \cite{DKS02} 
in terms of the modified notation $P^{a,c}_{b}(\kappa)$. 
\begin{lem}
\label{lem:DKS2.5}For any term 
$P_{b }^{a, c} (\kappa) $ 
with $b\geq 2$, 
which contains only derivatives of $\kappa$ with order at most $m-1$, 
we have 
\begin{equation}
\begin{array}[t]{l}
\underset{I}{\int}\text{ }\left| P_{b }^{a, c}\left( \kappa\right) \right| 
\text{ }ds
\leq 
C \cdot \text{ }\mathcal{L}\left[ f\right] ^{1-a -b }\left\|
\kappa\right\| _{2}^{b -\gamma}\left\| \kappa\right\| _{m, 2}^{\gamma }
\text{ ,}
\end{array}
\label{eq:DKS2.15}
\end{equation}
where 
$\gamma =\left(a +\frac{b}{2} -1\right) /m$, 
$C=C\left( n,m,a,b \right) $, 
$\mathcal{L}\left[ f\right] $\ the length of $f$, and 
\begin{equation*}
\left\| \kappa\right\| _{m,p}:=\overset{m}{\underset{i=0}{\sum }}
\left\| \nabla_{s}^{i}\kappa\right\| _{p}\text{, }\left\| \nabla_{s}^{i}
\kappa\right\|_{p}:=
\mathcal{L}\left[ f\right] ^{i+1-1/p}
\left(\underset{I}{\int}\text{ }|\nabla_{s}^{i}\kappa|^{p}\text{ }ds\right)^{1/p}
\text{.}
\end{equation*}
Moreover, if $a +\frac{b}{2} <2m+1$, then $\gamma<2$ 
and we have for any $\varepsilon >0$ 
\begin{equation}
\begin{array}[t]{l}
\underset{I}{\int}\text{}\left| P_{b }^{a, c}\left(\kappa\right)\right|\text{}ds
\leq \varepsilon \text{}
\underset{I}{\int}\text{}|\nabla_{s}^{m}\kappa|^{2}\text{}ds
+C\text{}\varepsilon ^{\frac{-\gamma }{2-\gamma }}
\left(\underset{I}{\int}\text{}|\kappa|^{2}\text{}ds
\right)^{\frac{b -\gamma}{2-\gamma }}
+ C\text{}\left(\underset{I}{\int}\text{ }|\kappa|^{2}\text{}ds\right)^{a +b -1}
\text{.}
\end{array}
\label{eq:DKS2.16}
\end{equation}
\end{lem}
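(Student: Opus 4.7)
The plan is to apply H\"older's inequality followed by the classical one-dimensional Gagliardo--Nirenberg interpolation, following the standard scheme from \cite{DKS02}. By linearity, it suffices to estimate a single monomial $\int_I |\nabla_s^{i_1}\kappa \ast \cdots \ast \nabla_s^{i_b}\kappa|\,ds$ with $\sum_j i_j = a$ and each $i_j \le c \le m-1$. Choose exponents $p_1,\ldots,p_b \in (1,\infty]$ satisfying $\sum_j p_j^{-1} = 1$; the generalised H\"older inequality bounds the integral by $\prod_j \|\nabla_s^{i_j}\kappa\|_{L^{p_j}}$.

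The classical scalar Gagliardo--Nirenberg inequality on $I$ then gives, for each $j$ (using $i_j \le m-1$ to keep the exponent $\theta_j = (i_j+1/2-1/p_j)/m$ strictly less than $1$),
\[
\|\nabla_s^{i_j}\kappa\|_{L^{p_j}} \le C\,\mathcal{L}[f]^{\alpha_j}\,\|\kappa\|_{L^2}^{1-\theta_j}\,\|\kappa\|_{m,2}^{\theta_j},
\]
where $\alpha_j$ is fixed by the scale-invariant normalisation built into $\|\cdot\|_{m,p}$. Multiplying over $j$, the constraints $\sum_j p_j^{-1}=1$ and $\sum_j i_j=a$ force
\[
\sum_j \theta_j = \frac{1}{m}\Bigl(a + \frac{b}{2} - 1\Bigr) = \gamma,
\]
and a direct scaling count (each $\nabla_s$ carries length dimension $-1$) yields $\sum_j \alpha_j = 1-a-b$. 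This produces (\ref{eq:DKS2.15}).

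For (\ref{eq:DKS2.16}) the hypothesis $a + b/2 < 2m+1$ is equivalent to $\gamma<2$. Apply Young's inequality with conjugate exponents $2/\gamma$ and $2/(2-\gamma)$ to absorb $\|\kappa\|_{L^2}^{b-\gamma}\|\kappa\|_{m,2}^{\gamma}$ as $\varepsilon\|\kappa\|_{m,2}^2 + C\varepsilon^{-\gamma/(2-\gamma)}\|\kappa\|_{L^2}^{2(b-\gamma)/(2-\gamma)}$. Then expand $\|\kappa\|_{m,2}^2$ into the sum $\sum_{i=0}^m \|\nabla_s^i\kappa\|_{L^2}^2$ and reinterpolate the intermediate orders $0<i<m$ via a second application of (\ref{eq:DKS2.15}) (with smaller $\gamma$, hence no $\varepsilon$ needed), reabsorbing them into the top-order term at the cost of an extra power of $\int_I|\kappa|^2\,ds$. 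What remains is $\varepsilon \int_I |\nabla_s^m\kappa|^2\,ds$ together with a pure power of the $L^2$-norm of $\kappa$, which is (\ref{eq:DKS2.16}).

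The main obstacle is the bookkeeping of the length factor $\mathcal{L}[f]^{1-a-b}$: Gagliardo--Nirenberg on a bounded interval scales inconveniently, and one must track how the $\alpha_j$'s combine with the $\mathcal{L}$-weights hidden inside $\|\cdot\|_{m,p}$. The cleanest verification is to rescale $I$ to unit length, apply the classical one-dimensional inequality there, and rescale back; the scale-invariant normalisation of the $\|\cdot\|_{m,p}$ norms is precisely what makes the powers of $\mathcal{L}[f]$ cancel into the single factor $\mathcal{L}[f]^{1-a-b}$. Since the statement is cited verbatim as \cite[Proposition~2.5]{DKS02}, the paper defers to that reference for the detailed computation.
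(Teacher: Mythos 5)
Your sketch is correct and follows exactly the standard argument behind the cited result: the paper itself gives no proof of this lemma, simply quoting it from \cite[Proposition~2.5]{DKS02}, and your route (generalised H\"older with $\sum_j p_j^{-1}=1$, scale-invariant Gagliardo--Nirenberg on each factor so that the $\theta_j$ sum to $\gamma$ and the length exponents sum to $1-a-b$, then Young's inequality with exponents $2/\gamma$, $2/(2-\gamma)$ plus reabsorption of the intermediate norms in $\|\kappa\|_{m,2}$) is precisely the proof in that reference. The only cosmetic caveat is that the interpolation step needs $p_j\geq 2$ rather than $p_j>1$, which is harmless since $b\geq 2$ allows the choice $p_j=b$ for all $j$.
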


\subsection{Short time existence}

In this part, we give a proof of the short time existence of smooth solutions for (\ref{eq:grad.flow.H}) below, 
i.e., (\ref{eq:flow-phi}) with hinged boundary condition. 
We follow the proof of short-time existence of classical solutions for the semilinear parabolic equations in \cite{Cannon84}, 
where a contraction map on a proper functional space plays the key role.

\begin{thm}
[Short time existence for hinged boundary condition]
\label{thm:STE_for_N}
For a given $\varphi_0 \in C^\infty ([0,L])$ with
$\varphi_0^{(\ell)}(0)=\varphi_0^{(\ell)}(L)=0$, 
$\forall\text{ }\ell\in\mathbb N$,
there is a positive time $t_0$ 
and a unique smooth function $\varphi(s,t)$ satisfying
\begin{align}
\begin{cases}
\partial_t\varphi=\partial^2_s\varphi+\lambda_1\sin\varphi-\lambda_2\cos\varphi
~&\mbox{in}~ (0,L)\times (0,t_0) 
\text{,}
\\
\varphi (s,0)=\varphi_0(s) ~&\mbox{}~\forall \text{ }  0\leq s\leq L 
\text{,}
\\
\partial_s\varphi(0,t)=\partial_s\varphi(L,t)=0 &\mbox{}~ \forall \text{ } 0\leq t\leq t_0 
\text{.}
\end{cases}
\label{eq:grad.flow.H}
\end{align}
\end{thm}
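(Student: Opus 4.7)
The plan is to establish short-time existence via a contraction-mapping argument, following the framework for semilinear parabolic equations in \cite{Cannon84}. Since the leading operator $\partial_t-\partial_s^2$ on $[0,L]$ with homogeneous Neumann data is explicitly invertible through the Neumann heat kernel $G_N(s,\sigma,t)$, I would recast (\ref{eq:grad.flow.H}) as the integral fixed-point equation
\[
\varphi(s,t) = \int_0^L G_N(s,\sigma,t)\varphi_0(\sigma)\,d\sigma + \int_0^t\!\!\int_0^L G_N(s,\sigma,t-\tau)\bigl[\lambda_1(\tau)\sin\varphi - \lambda_2(\tau)\cos\varphi\bigr]d\sigma\,d\tau,
\]
where $\vec\lambda=\vec\lambda[\varphi(\cdot,\tau)]$ is evaluated at each time via (\ref{eq:lambda-phi-hingedBC}).

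For small $t_0>0$ and a suitable $M>0$, introduce the complete metric space
\[
X_{t_0,M} := \bigl\{\tilde\varphi\in C([0,t_0];H^1([0,L])) : \tilde\varphi(\cdot,0)=\varphi_0,\ \sup_{0\le t\le t_0}\|\tilde\varphi(\cdot,t)-\varphi_0\|_{H^1}\le M\bigr\},
\]
and define $\Phi(\tilde\varphi):=\varphi$ by letting $\varphi$ solve the linear Neumann heat equation with source $\lambda_1[\tilde\varphi]\sin\tilde\varphi - \lambda_2[\tilde\varphi]\cos\tilde\varphi$ and initial datum $\varphi_0$. Short-time existence and uniqueness will reduce to showing that $\Phi$ is a contraction on $X_{t_0,M}$ for sufficiently small $t_0$.

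The three ingredients I would establish are: (a) a uniform bound $|\vec\lambda[\tilde\varphi(\cdot,t)]|\le C_2$ for every $\tilde\varphi\in X_{t_0,M}$, which is Lemma~\ref{lem:lambda} applied at each $t$, provided the non-degeneracy $\triangle L>0$ and the $L^2$-bound on $\partial_s\tilde\varphi$ survive along $X_{t_0,M}$; (b) standard parabolic smoothing estimates for Duhamel's formula giving $\|\Phi(\tilde\varphi)-\varphi_0\|_{C([0,t_0];H^1)}\to 0$ as $t_0\downarrow 0$ uniformly over $X_{t_0,M}$, which yields self-mapping; and (c) a Lipschitz-in-$H^1$ estimate $|\vec\lambda[\tilde\varphi_1]-\vec\lambda[\tilde\varphi_2]|\le K\|\tilde\varphi_1-\tilde\varphi_2\|_{H^1}$ obtained by differentiating the rational expression (\ref{eq:lambda-phi-hingedBC}), using Morrey's inequality to pass from $H^1$ to $L^\infty$ on the trigonometric factors, and using the lower bound $\det A_T\ge C_1$ from Lemma~\ref{lem:lambda}(i) to control the denominator. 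Combining (c) with the Lipschitz continuity of $\sin,\cos$ and the smoothing gain from Duhamel delivers a contraction factor less than one after shrinking $t_0$, and the Banach fixed point theorem then produces a unique solution.

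The main obstacle is the nonlocal coupling through $\vec\lambda$: the iteration closes only if $\det A_T$ stays uniformly bounded away from zero along $X_{t_0,M}$. Verifying this reduces to checking that the length excess $\triangle L=\int_0^L(1-\cos\tilde\varphi)\,ds>0$ is preserved, which follows since $\tilde\varphi\mapsto\int_0^L\cos\tilde\varphi\,ds$ is Lipschitz with respect to the $L^\infty$-norm and hence, by Morrey, with respect to the $H^1$-norm; choosing $t_0$ and $M$ small compared with the initial excess keeps the integral close to its initial value. Once the $C([0,t_0];H^1)$ fixed point $\varphi$ has been produced, the full smoothness asserted in the theorem follows by a parabolic bootstrap: the source $\lambda_1\sin\varphi-\lambda_2\cos\varphi$ is $C^{0,\alpha}$ in space--time, so Schauder theory upgrades $\varphi$ to $C^{2+\alpha,1+\alpha/2}$; iterating this, and using the compatibility conditions $\varphi_0^{(\ell)}(0)=\varphi_0^{(\ell)}(L)=0$ (which propagate to all orders through the equation, since the heat operator together with Neumann data preserves odd-order boundary derivatives), yields $C^\infty$ regularity up to the parabolic boundary.
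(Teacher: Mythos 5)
Your argument is essentially correct, but it takes a genuinely different route from the paper. You solve the Neumann problem directly on $[0,L]$ via the Neumann heat kernel and a fixed point in $C([0,t_0];H^1)$, then upgrade regularity by a Schauder bootstrap using the all-orders compatibility conditions at the corners. The paper instead never touches the boundary-value problem directly: it extends $\varphi_0$ evenly and $2L$-periodically to $\mathbb R$, solves the Cauchy problem on the whole line by a contraction in a $C^0$--$C^1$ sup-norm space built from the Gaussian kernel (Proposition \ref{prop:Short-T.E.}, relying on Lemma \ref{lem:19.2.1_Cannon84} from Cannon's book), and then observes that each Picard iterate is even and $2L$-periodic in $s$, so the limit is too, which forces $\partial_s\varphi(0,t)=\partial_s\varphi(L,t)=0$ for free; smoothness up to the boundary is inherited from interior smoothness on $\mathbb R$. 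Your approach buys directness and avoids the reflection/symmetry bookkeeping; the paper's buys elementariness (no Neumann Green's function, no Schauder theory, no explicit corner-compatibility analysis) at the cost of having to verify that the nonlocal term $h(\psi)=\lambda_1\sin\psi-\lambda_2\cos\psi$ respects evenness and periodicity along the iteration. Both exploit the same structural fact, namely that $\vec\lambda$ is a bounded, Lipschitz functional of $\psi$ once $\det A_T$ is bounded below (your ingredient (c) is the paper's Lemma \ref{lem:h}).

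One step of your proposal needs repair: controlling $\det A_T$ along the iteration by preserving only the length excess $\int_0^L(1-\cos\tilde\varphi)\,ds>0$ is not sufficient. A constant $\tilde\varphi\equiv\arccos(1-\triangle L/L)$ has positive length excess yet $\det A_{T_{\tilde\varphi}}=0$, since $\det A_T=\tfrac12\iint_{I\times I}\sin^2\bigl(\tilde\varphi(\sigma)-\tilde\varphi(s)\bigr)\,d\sigma\,ds$ vanishes exactly when $\tilde\varphi$ is constant modulo $\pi$. The lower bound in Lemma \ref{lem:lambda} uses \emph{both} components of the constraint $\int_I T\,ds=\triangle p$ (after rotation, $\int\sin\varphi\,ds=0$ supplies a point where $\varphi\in\pi\mathbb Z$, and the length excess supplies a second point where it is not), and iterates of your map need not satisfy this constraint exactly. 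The paper sidesteps this by building a quantitative oscillation lower bound $\inf_t\operatorname{osc}_{[0,L]\times\{t\}}(\psi)\ge d$ into the definition of the iteration space $\mathfrak B_a^L(d,M)$ and proving the $\det A_T$ bound from oscillation alone (Lemma \ref{lem:bd lambda}). You can fix your version either by adopting that oscillation condition, or by noting that the full vector $\int_0^L T_{\tilde\varphi}\,ds$ is Lipschitz in $\|\cdot\|_{L^\infty}$ and hence stays close to $\triangle p$ on $X_{t_0,M}$ for small $t_0$ and $M$; but some such non-degeneracy condition must be carried explicitly through the fixed-point space.
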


Instead of proving existence of smooth solutions of (\ref{eq:grad.flow.H}) directly, 
we first work with the equation for a function,  
$\varphi: \mathbb{R}\times(0,t_0)\rightarrow\mathbb{R}$, 
on the entire domain, 
\begin{align}
\begin{cases}
\partial_t\varphi=\partial^2_s\varphi+\lambda_1\sin\varphi-\lambda_2\cos\varphi 
~&\mbox{in}~ \mathbb R\times (0,t_0) 
\text{,}
\\
\varphi (s,0)=\varphi_0(s) ~&\mbox{}~ \forall\text{ } s\in\mathbb R 
\text{,}
\end{cases}
\label{eq:grad.flow.G}
\end{align}
where 
$\lambda_1=\lambda_1(t)$ and $\lambda_2=\lambda_2(t)$ 
are determined by the integrals over the finite domain $[0,L]$ 
as shown in (\ref{eq:lambda-phi-Det}), (\ref{eq:lambda-phi-hingedBC}).

\begin{prop}
\label{prop:Short-T.E.} 
Let $\varphi_0:\mathbb R\rightarrow\mathbb R$ be a smooth function satisfying 
\begin{equation}
\underset{\mathbb R}\sup\left\{\left|\varphi_0^{(\ell)}\right|\right\}\le K_\ell 
\label{eq:varphi_0_C^infty}
\end{equation} 
for some bounded sequence of numbers $\{K_\ell: \ell\in\mathbb N\cup\{0\}\}$. 
Then there exists $t_0>0$ 
and a unique $C^\infty$-smooth solution of (\ref{eq:grad.flow.G}) such that 
\[
\underset{t\rightarrow +0}\lim\text{ }\varphi^{(\ell)} (s,t)
=\varphi_0^{(\ell)} (s)
\]
$\forall \text{ }\ell\in\mathbb N\cup\{0\}$, $\forall \text{ }s\in [0,L]$, 
$\forall \text{ }t\in(0,t_0)$. 
\end{prop}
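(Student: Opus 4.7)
The plan is to recast (\ref{eq:grad.flow.G}) in mild form via Duhamel's principle and obtain a fixed point in a sup-norm Banach space, as in the treatment of semilinear heat equations in \cite{Cannon84}. With the Gauss kernel $G(s,t)=(4\pi t)^{-1/2}\exp(-s^{2}/(4t))$ on $\mathbb{R}$, any classical solution must satisfy
\begin{equation*}
\varphi(s,t) = (G(\cdot,t)\ast\varphi_{0})(s) + \int_{0}^{t}\bigl(G(\cdot,t-\tau)\ast N[\varphi(\cdot,\tau)]\bigr)(s)\,d\tau,
\end{equation*}
where $N[\varphi(\cdot,\tau)](y):=\lambda_{1}(\tau)\sin\varphi(y,\tau)-\lambda_{2}(\tau)\cos\varphi(y,\tau)$ and $\lambda_{i}(\tau)$ is the nonlocal functional of $\varphi(\cdot,\tau)|_{[0,L]}$ given by (\ref{eq:lambda-phi-hingedBC}). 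Note that $y$ ranges over $\mathbb{R}$, but $\lambda_{i}$ only sees $\varphi$ on $[0,L]$.

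\textbf{Contraction map.} For $t_{0}>0$ and $R>K_{0}:=\sup_{\mathbb{R}}|\varphi_{0}|$ to be chosen, let
\begin{equation*}
X_{t_{0},R}:=\bigl\{\psi\in C_{b}(\mathbb{R}\times[0,t_{0}]):\ \psi(\cdot,0)=\varphi_{0},\ \|\psi\|_{\infty}\le R\bigr\},
\end{equation*}
a complete metric space under the sup norm, and define $\mathcal{T}\psi$ by the mild formula with $\lambda_{i}$ computed from $\psi$. For small $t_{0}$, $\psi(\cdot,\tau)|_{[0,L]}$ stays uniformly close to $\varphi_{0}|_{[0,L]}$; continuity of $\det A_{T}$ in $\psi$, together with the initial non-degeneracy $\det A_{T_{0}}|_{[0,L]}>0$ (corresponding to $\triangle L=L-|\triangle p|>0$ for the underlying curve), yields a uniform lower bound $\det A_{T_{\psi}}\ge C_{1}>0$ on $[0,t_{0}]$, and Lemma~\ref{lem:lambda} (or the estimates in its proof) then supplies $|\lambda_{i}(\tau)|\le C_{\ast}$. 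Since $\sin,\cos$ are $1$-Lipschitz and both $(\det A_{T})^{-1}$ and the numerators in (\ref{eq:lambda-phi-hingedBC}) are Lipschitz in $\psi|_{[0,L]}$ under the sup norm on $X_{t_{0},R}$, one obtains $\|N[\psi_{1}]-N[\psi_{2}]\|_{\infty}\le C\|\psi_{1}-\psi_{2}\|_{\infty}$, and, since $\int_{\mathbb{R}}G(\cdot,t-\tau)\,dy=1$,
\begin{equation*}
\|\mathcal{T}\psi_{1}-\mathcal{T}\psi_{2}\|_{\infty}\le C\,t_{0}\,\|\psi_{1}-\psi_{2}\|_{\infty}.
\end{equation*}
The self-mapping $\mathcal{T}(X_{t_{0},R})\subset X_{t_{0},R}$ is verified in the same manner, so shrinking $t_{0}$ makes $\mathcal{T}$ a contraction and Banach's theorem produces a unique continuous mild solution.

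\textbf{Regularity, initial trace, and main obstacle.} Bootstrapping proceeds in the standard way: with $N[\varphi]$ bounded and continuous, differentiating the Duhamel representation in $s$ using the decay of $\partial_{s}^{k}G$ produces $s$-derivatives of every order on $\mathbb{R}\times(0,t_{0})$, so $N[\varphi]$ is H\"older, parabolic Schauder theory gives $\varphi\in C^{2+\alpha,1+\alpha/2}_{\mathrm{loc}}$, and iteration with the observation that $\lambda_{i}(\tau)$ is smooth in $\tau$ once $\varphi(\cdot,\tau)|_{[0,L]}$ is smooth upgrades $\varphi$ to $C^{\infty}$; uniqueness among smooth solutions is inherited from the mild uniqueness. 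The initial limits $\varphi^{(\ell)}(s,t)\to\varphi_{0}^{(\ell)}(s)$ as $t\to 0^{+}$ follow from $\partial_{s}^{\ell}(G(\cdot,t)\ast\varphi_{0})=G(\cdot,t)\ast\varphi_{0}^{(\ell)}\to\varphi_{0}^{(\ell)}$ uniformly (using (\ref{eq:varphi_0_C^infty})), combined with an induction on $\ell$ showing that the Duhamel term contributes $O(t)$ to $\partial_{s}^{\ell}\varphi$ in sup norm. The principal obstacle throughout is the nonlocal nonlinear dependence of $\lambda_{i}$ on $\varphi$ via $A_{T}^{-1}$: neither boundedness nor Lipschitz continuity of $\lambda_{i}$ is automatic, both relying on a uniform positive lower bound for $\det A_{T}$, which is precisely what forces the argument into a small time interval.
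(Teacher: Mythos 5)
There is a genuine gap, and it sits exactly at the heart of the difficulty this proposition has to overcome. Your fixed-point space $X_{t_{0},R}$ controls only $\sup|\psi|$, but the nonlocal coefficients $\lambda_{1},\lambda_{2}$ defined by (\ref{eq:lambda-phi-hingedBC}) involve the integrals $\int_{0}^{L}(\partial_{s}\psi)^{2}\cos\psi\,ds$ and $\int_{0}^{L}(\partial_{s}\psi)^{2}\sin\psi\,ds$. Hence $N[\psi]$ is not a function of $\psi$ alone but of $\psi$ and $\partial_{s}\psi$: the map $\mathcal{T}$ is not even well defined on all of $X_{t_{0},R}$ (whose elements need not be differentiable in $s$), the bound $|\lambda_{i}|\le C_{\ast}$ cannot be obtained from $\|\psi\|_{\infty}\le R$, and your claimed Lipschitz estimate ``the numerators in (\ref{eq:lambda-phi-hingedBC}) are Lipschitz in $\psi|_{[0,L]}$ under the sup norm'' is false. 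A second, independent problem is the lower bound on $\det A_{T_{\psi}}$: you argue that elements of $X_{t_{0},R}$ stay uniformly close to $\varphi_{0}$ for small $t_{0}$, but membership in $X_{t_{0},R}$ only imposes $\psi(\cdot,0)=\varphi_{0}$ and continuity, so for any $t_{0}>0$ the space contains functions that become constant on $[0,L]$ at positive times, where $\det A_{T_{\psi}}=0$ and $\lambda_{i}$ blows up. Moreover a sup-norm neighbourhood of $\varphi_{0}$ would not save you: since $2\det A_{T}=\iint_{I\times I}\sin^{2}(\psi(\sigma)-\psi(s))\,d\sigma\,ds$, a function with the full oscillation of $\varphi_{0}$ concentrated on a set of tiny measure has arbitrarily small determinant, so a quantitative lower bound needs a gradient bound in addition to an oscillation bound.

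The paper's proof repairs both defects simultaneously by running the contraction in the $C^{1}$-type norm $\|\psi\|_{a}=\sup(|\psi|+|\partial_{s}\psi|)$ on the set $\mathfrak{B}_{a}^{L}(d,M)$, which builds in the two constraints $\inf_{t}\operatorname{osc}_{[0,L]}\psi(\cdot,t)\ge d$ and $\sup|\partial_{s}\psi|\le M$; Lemma \ref{lem:bd lambda} converts exactly these two pieces of information into $\det A_{T_{\psi}}\ge C_{2}>0$ and $|\lambda_{i}|\le C_{3}$ uniformly over the set, and the contraction constant for the derivative component comes from $\int_{\mathbb{R}}|\partial_{x}K(x,t)|\,dx=(\pi t)^{-1/2}$, producing the factor $C_{4}(t_{0}+2\sqrt{t_{0}/\pi})$. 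Your Duhamel decomposition, bootstrap, and treatment of the initial trace otherwise parallel the paper, but as written the core fixed-point step does not close; to fix it you must enlarge the norm to include $\partial_{s}\psi$, restrict to a subset carrying uniform oscillation and gradient bounds, verify that $\mathcal{T}$ preserves that subset for small $t_{0}$ (which is the content of Lemma \ref{lem:contraction}(i)), and prove the Lipschitz estimate for $h$ in the $C^{1}$ norm (Lemma \ref{lem:h}).
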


For the proof of Proposition \ref{prop:Short-T.E.}, we cite the following lemma from \cite{Cannon84}. 

\begin{lem}[Lemma 19.2.1 of \cite{Cannon84}]
\label{lem:19.2.1_Cannon84}
For bounded continuous $f$ in $-\infty<x<\infty$, $t\ge 0$, which is uniformly H\"{o}lder with exponent 
$\alpha$, $0<\alpha<1$, with respective to $x$, the potential 
\[
z(x,t)=\int_0^t\int_{-\infty}^\infty \text{ } K(x-\xi,t-\tau) f(\xi,\tau) \text{ } d\xi d\tau
\] 
possesses the following properties:
\begin{itemize}
\item[1)] $z$, $z_x$, $z_t$, and $z_{xx}$ are continuous; 
\item[2)] $z_t=z_{xx}+f(x,t)$, $-\infty<x<\infty$, $0< t$; 
\item[3)] $|z(x,t)|\le t \cdot \|f\|_t$, $-\infty<x<\infty$, $0\le t$, 
where $\|f\|_t:=\underset{x\in\mathbb R;\tau\in[0,t]}\sup |f(x,\tau)|$; 
\item[4)] $|z_x (x,t)|\le 2\pi^{-1/2} \|f\|_t \cdot t^{1/2}$, $-\infty<x<\infty$, $0\le t$; 
\item[5)] $|z_{xx}(x,t)|\le C |f|_\alpha \cdot t^{\alpha/2}$, $-\infty<x<\infty$, $0\le t$, 
where $C$ is a positive number and 
\[
|f|_\alpha:=\underset{x\in\mathbb R; 0\le t; \delta>0}\sup 
\left\{ \frac{|f(x+\delta,t)-f(x,t)|}{\delta^{\alpha}}\right\}
\text{;}
\] 
\item[6)] $|z_t (x,t)|\le \|f\|_t+C |f|_\alpha \cdot t^{\alpha/2}$, $-\infty<x<\infty$, $0\le t$. 
\end{itemize}
\end{lem}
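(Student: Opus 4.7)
The plan is to prove the six properties by direct estimation of the Gaussian heat kernel $K(y,\tau)=(4\pi\tau)^{-1/2}\exp(-y^2/(4\tau))$ and its spatial derivatives, exploiting three standard identities: (a) $\int_{\mathbb R}K(y,\tau)\,dy=1$ for all $\tau>0$; (b) $K$ satisfies $K_\tau=K_{yy}$ on $\mathbb R\times(0,\infty)$; (c) the mollifier property $\int_{\mathbb R}K(x-\xi,\tau)g(\xi)\,d\xi\to g(x)$ as $\tau\downarrow 0$ whenever $g$ is bounded and continuous. All moments of $K$ can be computed explicitly, and I will need the two specific $L^1$-type bounds
\begin{equation*}
\int_{\mathbb R}|K_y(y,\tau)|\,dy=(\pi\tau)^{-1/2},\qquad
\int_{\mathbb R}|K_{yy}(y,\tau)|\,|y|^{\alpha}\,dy\le C_\alpha\,\tau^{\alpha/2-1},
\end{equation*}
both of which follow by substituting $y=u\sqrt{4\tau}$ and evaluating Gaussian moments.

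Properties 3) and 4) are then immediate. For 3), pull the sup out and use $\int K\,d\xi=1$ to integrate in $\xi$ and then $\tau$. For 4), formal differentiation under the integral is legitimate because the integrand is dominated by an integrable function for $\tau$ bounded away from $t$; then the first $L^1$ estimate above gives $\int_0^t(\pi(t-\tau))^{-1/2}d\tau=2\pi^{-1/2}t^{1/2}$. Continuity of $z$ and $z_x$ (part of property 1) follows by dominated convergence from the same representations.

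The real work lies in properties 5), 2), and 6), because $\int|K_{yy}(\cdot,\tau)|\,d\xi\sim\tau^{-1}$ is not integrable at $\tau=t$, so one cannot differentiate twice under the integral naively. The key trick is to exploit $\int_{\mathbb R}K_{yy}(x-\xi,s)\,d\xi=0$ (obtained by integrating $\partial_\xi^2$) to rewrite, formally,
\begin{equation*}
z_{xx}(x,t)=\int_0^t\!\int_{\mathbb R}K_{yy}(x-\xi,t-\tau)\bigl[f(\xi,\tau)-f(x,\tau)\bigr]\,d\xi\,d\tau,
\end{equation*}
and to justify this identity by a limiting argument in which the $x$-derivatives are moved onto $K$ only for $\tau\in[0,t-\varepsilon]$, then $\varepsilon\downarrow 0$ is taken using the uniform Hölder continuity of $f$ in $x$. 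Inserting the bound $|f(\xi,\tau)-f(x,\tau)|\le|f|_\alpha|x-\xi|^{\alpha}$ and the second $L^1$ estimate above yields property 5):
\begin{equation*}
|z_{xx}(x,t)|\le|f|_\alpha\int_0^t C_\alpha(t-\tau)^{\alpha/2-1}d\tau=C|f|_\alpha\,t^{\alpha/2}.
\end{equation*}
Continuity of $z_{xx}$ is obtained from this representation by splitting the $\tau$-integral near $\tau=t$ and applying dominated convergence on the complementary interval, completing property 1).

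Properties 2) and 6) then drop out together. Write $z(x,t)=\int_0^t w(x,t,\tau)\,d\tau$ with $w(x,t,\tau)=\int K(x-\xi,t-\tau)f(\xi,\tau)\,d\xi$; by the mollifier property $w(x,t,\tau)\to f(x,t)$ as $\tau\uparrow t$, while for $\tau<t$ we may apply $\partial_t$ under the integral and use $K_t=K_{yy}$ to obtain $w_t=\int K_{yy}(x-\xi,t-\tau)f(\xi,\tau)\,d\xi$. Summing these contributions and again inserting the mean-zero cancellation $\int K_{yy}\,d\xi=0$ gives $z_t(x,t)=f(x,t)+z_{xx}(x,t)$, which is property 2); combining with the bound from property 5) yields property 6), and continuity of $z_t$ follows from continuity of $f$ and $z_{xx}$. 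The main obstacle throughout is the non-integrable singularity of $K_{yy}$ at $\tau=t$; the uniform Hölder regularity of $f$, used via the mean-zero subtraction, is precisely what converts this singularity into an integrable $(t-\tau)^{\alpha/2-1}$ and drives every nontrivial estimate.
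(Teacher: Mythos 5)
The paper does not prove this lemma at all: it is quoted verbatim from Cannon's monograph (Lemma 19.2.1 of \cite{Cannon84}) and used as a black box, so there is no in-paper argument to compare against. Your proposal correctly reconstructs the classical proof given in that source (and in Friedman's treatment of volume potentials) --- the Gaussian moment bounds for $K_y$ and $K_{yy}$, the mean-zero subtraction $\int_{\mathbb R}K_{yy}(x-\xi,s)\,d\xi=0$ combined with the H\"older hypothesis to tame the $(t-\tau)^{-1}$ singularity, and the $\varepsilon$-truncation plus mollifier limit for $z_t=z_{xx}+f$ --- so the approach and the constants (e.g.\ $2\pi^{-1/2}t^{1/2}$ in 4)) are exactly right.
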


We introduce some notation for the proof of Proposition \ref{prop:Short-T.E.}. 
For any $a>0$, 
let 
$D_a:=\{(s,t): s\in\mathbb{R}, t\in(0, a)\}$, 
$D_a^L:=\{(s,t): s\in[0,L], t\in(0, a)\}$, 
and 
\begin{align}
\mathfrak{B}_{a}:=
\{\psi: D_{a}\rightarrow\mathbb{R}: \psi, \partial_s \psi \in C^0(D_{a}) ~\mbox{with}~ \left\| \psi \right\|_a<\infty\} 
\text{,}
\notag
\end{align}
where 
\begin{align}
\left\| \psi \right\|_a:=\underset{(s,t)\in D_a}\sup\left\{|\psi(s,t)|+|\partial_s \psi(s,t)| \right\}
\text{.}
\notag
\end{align}
Furthermore, for the purpose of estimating the Lagrange multiplier, we define the space 
\begin{align}
\mathfrak{B}_{a}^{L}(d,M):=
\left\{ 
\psi \in\mathfrak{B}_{a} : 
\underset{t\in[0,a)}\inf \underset{[0,L]\times\{t\}}{\text{osc}} (\psi)\geq d, 
~\sup_{D_a} |\partial_s \psi|\leq M 
\right\}
\text{.} 
\notag
\end{align} 

For a given $\varphi_0\in C^1(\mathbb R)\cap L^{\infty}(\mathbb R)$ 
with $\underset{[0,L]}\sup\text{ }|\varphi_0^\prime|\le M$, 
we define the map $\mathcal{H}_{\varphi_0}$ from 
$\{\psi\in\mathfrak{B}_{a}^{L}(d,M): \psi(s,0)=\varphi_0\}\subset\mathfrak{B}_{a}$ 
to $\mathfrak{B}_{a}$ by 
\begin{align}
\mathcal{H}_{\varphi_0}(\psi)=U_{\varphi_0}+H^{\psi}_0 
=:H_{\varphi_0}^{\psi} 
\text{,}
\label{eq:H_phi^v}
\end{align} 
where $U_{\varphi_0}$ is the solution of 
\begin{align} 
\label{eq:U}
\begin{cases}
\partial_t U_{\varphi_0}=\partial_s^2 U_{\varphi_0} &\mbox{in}~ D_{a},
\\
U_{\varphi_0}(s,0)=\varphi_0(s), &\forall~s\in\mathbb{R}, 
\end{cases}
\end{align}
$H^{\psi}_0$ is the solution of
\begin{align} 
\label{eq:H}
\begin{cases}
\partial_t H^{\psi}_0=\partial_s^2 H^{\psi}_0+h(\psi) &\mbox{in}~D_{a},
\\
H^{\psi}_0 (s,0)=0, &\forall~s\in\mathbb{R}, 
\\
h(\psi):=\lambda_1(t)\sin \psi(s,t)-\lambda_2(t)\cos \psi(s,t), 
\end{cases}
\end{align}
and $\lambda_1(t)$, $\lambda_2(t)$ are determined by the formulae 
(\ref{eq:lambda-phi-Det}), (\ref{eq:lambda-phi-hingedBC}). 
Notice that   
$\{\psi\in\mathfrak{B}_{a}^{L}(d,M): \psi(s,0)=\varphi_0\}$ 
is a closed subset of the Banach space $(\mathfrak{B}_{a}, \left\|\cdot\right\|_a)$.

For a given $\psi\in\{\psi\in\mathfrak{B}_{a}^{L}(d,M): \psi(s,0)=\varphi_0\}$, 
the functions $\lambda_1(t)$ and $\lambda_2(t)$ in (\ref{eq:H}) are continuous in $t$. 
Thus, both (\ref{eq:U}) and (\ref{eq:H}) are linear parabolic equations with sufficiently regular coefficients 
such that we may apply Lemma \ref{lem:19.2.1_Cannon84} 
to obtain the existence of classical $C^2$-smooth solutions of 
(\ref{eq:U}) and (\ref{eq:H}) with the following expression 
\begin{align}
U_{\varphi_0}(s,t)&=\int_{-\infty}^{\infty}K(s-\xi,t)\varphi_0(\xi)\text{ }d\xi, 
~~~~~~~~~~~~~~~~~~~~ \text{ } (s,t)\in\mathbb R\times[0,\infty), 
\label{eq:U_varphi_0} 
\\
H^{\psi}_0 (s,t)
&=\int_0^t\int_{-\infty}^{\infty}K(s-\xi,t-\tau)h(\psi)(\xi,\tau) \text{ }d\xi d\tau, 
~ \text{ } (s,t)\in\mathbb R\times[0,\infty), 
\label{eq:H^varphi}
\end{align}
where $K(s,t):=\frac{1}{\sqrt{4\pi t}}e^{-\frac{s^2}{4t}}$. 
Therefore, the map 
$\mathcal{H}_{\varphi_0}:\{\psi\in\mathfrak{B}_{a}^{L}(d,M): 
\psi(s,0)=\varphi_0\}\rightarrow\mathfrak{B}_{a}$ 
is well-defined for any $a>0$.

\begin{lem} \label{lem:bd lambda}
Let $d\in(0,\pi/2)$ be a positive constant. 
Then for any $\psi\in\mathfrak{B}_{a}^{L}(d,M)$, 
there exist positive numbers 
$C_1$, $C_2$, $C_3$, depending only on $M$ and $d$, 
such that
\begin{align}
C_1\leq\int_0^L\cos^2\psi \text{ }ds~\mbox{, }~C_1
\leq\int_0^L\sin^2\psi \text{ }ds 
\text{,}
\label{ineq:c_and_s}
\end{align}
\begin{align}
C_2\leq\int_0^L\cos^2\psi \text{ }ds \cdot \int_0^L\sin^2\psi \text{ }ds
-\left(\int_0^L\cos\psi\sin\psi \text{ }ds\right)^2 
\text{,}
\label{ineq:c^s^2-(cs)^2}
\end{align}
\begin{align}
|\lambda_1|, ~ |\lambda_2|, ~ |h(\psi)|\leq C_3 
\text{.}
\label{ineq:bd h}
\end{align}
\end{lem}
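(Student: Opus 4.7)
The plan is to adapt the argument of Lemma~\ref{lem:lambda}, substituting the Morrey-inequality input on $\varphi$ by the pointwise Lipschitz bound $|\partial_s\psi|\le M$ built into $\mathfrak{B}_a^L(d,M)$, and using the oscillation hypothesis $\operatorname{osc}(\psi(\cdot,t))\ge d$ in place of (\ref{eq:delta-varphi}). All three inequalities are pointwise in $t\in[0,a)$. I would prove (\ref{ineq:c^s^2-(cs)^2}) first, deduce (\ref{ineq:c_and_s}) from it by an elementary algebraic manipulation, and then read off (\ref{ineq:bd h}) from the explicit formulas for $\lambda_1,\lambda_2$.

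For (\ref{ineq:c^s^2-(cs)^2}), the same expansion used at the top of Lemma~\ref{lem:lambda} gives
\begin{equation*}
\int_0^L\cos^2\psi\,ds\cdot\int_0^L\sin^2\psi\,ds-\Bigl(\int_0^L\cos\psi\sin\psi\,ds\Bigr)^2=\tfrac12\iint_{[0,L]^2}\sin^2[\psi(\sigma)-\psi(s)]\,d\sigma ds.
\end{equation*}
Since $\psi(\cdot,t)$ is continuous on $[0,L]$ with oscillation $\ge d\in(0,\pi/2)$, I would pick $s_\pm\in[0,L]$ realising its supremum and infimum, so that $|\psi(s_+)-\psi(s_-)|\ge d$. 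Setting $\delta_0:=d/(3M)$ and $J_{s_\pm}:=[s_\pm-\delta_0,s_\pm+\delta_0]\cap[0,L]$ (of length $\ge\min(\delta_0,L)$), the bound $|\partial_s\psi|\le M$ forces $|\psi(\sigma)-\psi(s_\pm)|\le d/3$ for $\sigma\in J_{s_\pm}$, and hence $|\psi(\sigma_1)-\psi(\sigma_2)|\ge d/3$ for all $(\sigma_1,\sigma_2)\in J_{s_-}\times J_{s_+}$. Restricting the double integral to this product set and using $d/3\in(0,\pi/6)$ yields a strictly positive lower bound $C_2$ depending only on $M$, $d$ (and the fixed length $L$).

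For (\ref{ineq:c_and_s}), let $a:=\int_0^L\cos^2\psi\,ds$ and $b:=\int_0^L\sin^2\psi\,ds$. Since $\cos^2+\sin^2=1$, $a+b=L$, and (\ref{ineq:c^s^2-(cs)^2}) gives $ab\ge C_2$; as $0\le a,b\le L$, this forces $\min(a,b)\ge ab/L\ge C_2/L=:C_1$. For (\ref{ineq:bd h}), I would substitute the lower bound $\det A_T\ge C_2$ into the explicit representation (\ref{eq:lambda-phi-hingedBC}) with $\varphi$ replaced by $\psi$. Each integral in the numerators is bounded by $M^2 L$ or $L$ thanks to $|\partial_s\psi|\le M$ and $|\sin|,|\cos|\le 1$, so $|\lambda_i|\le 2M^2L^2/C_2$ for $i=1,2$, and the estimate on $h(\psi)$ follows from $|h(\psi)|\le|\lambda_1|+|\lambda_2|$.

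No serious obstacle is anticipated; the arithmetic is essentially a rerun of Lemma~\ref{lem:lambda}, aided by the fact that the Lipschitz constant $M$ is here an explicit hypothesis rather than something extracted from an $L^2$-bound via Morrey. The only technical point to watch is that $s_\pm$ may lie near the endpoints of $[0,L]$, so the length of $J_{s_\pm}$ must be controlled by $\min(\delta_0,L)$ rather than $\delta_0$; this still gives a positive, $\psi$-independent lower bound depending only on $M$, $d$, $L$.
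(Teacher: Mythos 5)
Your reduction of (\ref{ineq:c_and_s}) to (\ref{ineq:c^s^2-(cs)^2}) via $a+b=L$ and $\min(a,b)\ge ab/L$ is correct and neater than the paper's direct argument, and your treatment of (\ref{ineq:bd h}) is fine. However, the core step of your proof of (\ref{ineq:c^s^2-(cs)^2}) has a genuine gap. From $|\psi(\sigma_1)-\psi(\sigma_2)|\ge d/3$ on $J_{s_-}\times J_{s_+}$ you conclude a positive lower bound on $\sin^2[\psi(\sigma_1)-\psi(\sigma_2)]$, but a lower bound on the absolute difference alone does not bound $\sin^2$ from below: the hypothesis only gives $\operatorname{osc}\psi\ge d$ with no upper bound, so if you take $s_\pm$ at the supremum and infimum, the difference $\psi(s_+)-\psi(s_-)$ can be arbitrarily close to (or equal to) a nonzero multiple of $\pi$. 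Concretely, for $\psi(s)=\pi s/L$ one has $\operatorname{osc}\psi=\pi\ge d$, $s_-=0$, $s_+=L$, and on $J_{s_-}\times J_{s_+}$ the difference is within $2d/3$ of $\pi$, so $\sin^2$ there is small, not bounded below by $\sin^2(d/3)$. The repair is easy: since $\psi(\cdot,t)$ is continuous with oscillation at least $d$, the intermediate value theorem lets you choose $s_\pm$ with $\psi(s_+)-\psi(s_-)$ exactly equal to $d\in(0,\pi/2)$; then on the product set the difference lies in $[d/3,5d/3]\subset(0,\pi)$ and $\sin^2\ge\min\{\sin^2(d/3),\sin^2(5d/3)\}>0$. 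With that correction your route --- the double-integral identity $\det A_T=\tfrac12\iint\sin^2[\psi(\sigma)-\psi(s)]\,d\sigma\,ds$ borrowed from Lemma \ref{lem:lambda} --- is a legitimate alternative to the paper's proof, which instead establishes (\ref{ineq:c_and_s}) first by exhibiting a set of measure at least $d/(4M)$ on which $|\cos\psi|\ge\sin(d/4)$, and then factors the determinant as $\tfrac14(1+\mu^2)(1+1/\mu^2)\int\sin^2(\psi+c_1)\int\sin^2(\psi+c_2)$ so as to apply the same measure-theoretic bound to each factor. (A minor point, shared with the paper's own statement: the constants inevitably depend on $L$ as well as on $d$ and $M$.)
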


\begin{proof}
From the assumption  
$\underset{t\in[0,a)}\inf \underset{[0,L]\times\{t\}}{\text{osc}} (\psi)\geq d>0$, for any fixed $t\in[0,a)$, there exists $s_0=s_0(t)\in[0,L]$ such that 
\[
|\cos \psi(s_0, t)| \ge\sin\frac{d}{2}
\text{.}
\]  
Since $\psi(\cdot,t)$ is $C^1$-smooth 
and fulfills $\underset{D_a}\sup\text{ } |\partial_s\psi| \leq M$, 
we may conclude that 
\[
\mathcal{L}\left(\left
\{\sigma\in[0,L]: |\psi(\sigma,t)-\psi(s_0,t)|\le \frac{d}{4} \right\}
\right)
\ge\frac{d}{4M} 
\text{,}
\] 
where $\mathcal{L}$ is the one-dimensional Lebesgue measure. 
Thus, 
\begin{align} 
\mathcal{L}\left(
\left\{\sigma\in[0,L]: |\cos\psi(\sigma,t)|\ge \sin\frac{d}{4}\right\}\right)
\ge\frac{d}{4M} 
\text{.}
\label{eq:lower_bdd_lambda-0} 
\end{align}
Therefore, 
\begin{align}
\int_0^L\cos^2\psi\text{ }ds
\geq 
\frac{d}{4M}\sin^2 \frac{d}{4}
=:C_1
\text{,}
\notag
\end{align}
which gives the first inequality in (\ref{ineq:c_and_s}). 

Similarly, the second inequality in (\ref{ineq:c_and_s}) also holds by applying the same argument. 

As 
$f=\frac{\cos\psi}{(\int^{L}_{0}\cos^2\psi \text{ } ds)^{1/2}}$,
$g=\frac{\sin\psi}{(\int^{L}_{0}\sin^2\psi \text{ } ds)^{1/2}}$, 
we may apply 
\[
\frac{1}{2}\int^{L}_{0}(f\pm g)^2\text{ }ds=1\pm\int^{L}_{0} f g\text{ }ds 
\] 
and let 
$\mu=\frac{(\int^{L}_{0}\cos^2\psi \text{ } ds)^{1/2}}
{(\int^{L}_{0}\sin^2\psi \text{ } ds)^{1/2}}$ 
to derive 
\begin{align}
&\int_{0}^{L}\cos^2\psi \text{ }ds\cdot\int_{0}^{L}\sin^2\psi \text{ }ds
-\left(\int_0^L\cos\psi\sin\psi \text{ }ds\right)^2 
\notag 
\\
=&
\left(\int_0^L\cos^2\psi \text{ }ds\cdot\int_0^L\sin^2\psi \text{ }ds \right) 
\left[1- \left(\int_0^L f g \text{ }ds\right)^2\right] 
\notag
\\
=&
\left(\int_0^L\cos^2\psi \text{ }ds\cdot\int_0^L\sin^2\psi \text{ }ds \right) 
\left(1-\int_0^L f g \text{ }ds\right)
\left(1+\int_0^L f g \text{ }ds\right)
\notag
\\
=&\frac{1}{4}\int_0^L\cos^2\psi \text{ }ds 
\cdot \int_0^L\sin^2\psi \text{ }ds 
\left(\int_0^L(f-g)^2 \text{ }ds\right)
\left(\int_0^L(f+g)^2 \text{ }ds\right) 
\notag 
\\
=&\frac{1}{4}\left(\int_0^L(\cos\psi-\mu\sin\psi)^2 \text{ }ds\right)
\left(\int_0^L\left(\frac{1}{\mu}\cos\psi+\sin\psi\right)^2\text{ }ds\right) 
\notag  
\\
=&\frac{1}{4}(1+\mu^2) (1+1/\mu^2)
\left(\int_0^L \sin^2(\psi+c_1(\mu)) \text{ }ds\right)
\left(\int_0^L \sin^2(\psi+c_2(\mu)) \text{ }ds\right)
\notag
\\
\geq&
\left(\int_0^L \sin^2(\psi+c_1(\mu)) \text{ }ds\right)
\left(\int_0^L \sin^2(\psi+c_2(\mu)) \text{ }ds\right) 
\text{.}
\label{eq:lower_bdd_lambda-1} 
\end{align}
By applying a similar argument as in deriving 
(\ref{eq:lower_bdd_lambda-0}), 
we obtain 
\begin{align} 
\mathcal{L}\left(
\left\{\sigma\in[0,L]: |\sin(\psi(\sigma,\cdot)+c_i(\mu))|
\ge \sin \frac{d}{4}\right\}
\right)
\ge\frac{d}{4M} 
\label{eq:lower_bdd_lambda-2} 
\end{align}
for $i\in\{1,2\}$. 
Thus, (\ref{ineq:c^s^2-(cs)^2}) is obtained by applying 
(\ref{eq:lower_bdd_lambda-1}) and (\ref{eq:lower_bdd_lambda-2}). 

The inequalities in (\ref{ineq:bd h}) are implied by applying 
(\ref{ineq:c^s^2-(cs)^2}), 
(\ref{eq:lambda-phi-Det}), (\ref{eq:lambda-phi-hingedBC}).
\end{proof}

\begin{lem} \label{lem:h}
Let $h:\mathbb R\rightarrow\mathbb R$ be the function given in (\ref{eq:H}). 
\begin{itemize}
\item [(i)]
For any $\psi_0, \psi_1\in \mathfrak{B}_{a}^{L}(d,M)$, one has 
\begin{align}
|h(\psi_1)-h(\psi_0)| 
\leq C_4 \cdot \left\| \psi_1-\psi_0 \right\|_a 
\label{esti:h}
\end{align}
for some constant $C_4=C_4(d, M)$. 
\item [(ii)] For a given $\varphi\in \mathfrak{B}_{a}^{L}(d,M)$ 
with the regular properties  
\[
\partial_s^j\varphi\in C^0(\mathbb R\times [0,a])\cap L^\infty(\mathbb R\times [0,a])
\text{, } \forall \text{ } j \in \{0,1,...,m\} \text{,}
\]
the composite function $h(\varphi)$ also fulfills 
\[
\partial_s^j h(\varphi)\in C^0(\mathbb R\times [0,a])\cap L^\infty(\mathbb R\times [0,a])
\text{, } \forall \text{ } j \in \{0,1,...,m\} \text{.}
\]
\end{itemize}
\end{lem}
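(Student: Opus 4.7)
My plan is to reduce part (i) to a Lipschitz estimate for the Lagrange multipliers $\lambda_1,\lambda_2$ viewed as functionals on $\mathfrak{B}_a^L(d,M)$, and to handle (ii) by the chain rule applied to $\sin\varphi,\cos\varphi$ together with the boundedness already supplied by Lemma \ref{lem:bd lambda}.

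For (i), I first split
\begin{align*}
h(\psi_1)-h(\psi_0)
&=\lambda_1(\psi_1)(\sin\psi_1-\sin\psi_0)-\lambda_2(\psi_1)(\cos\psi_1-\cos\psi_0)\\
&\quad+(\lambda_1(\psi_1)-\lambda_1(\psi_0))\sin\psi_0-(\lambda_2(\psi_1)-\lambda_2(\psi_0))\cos\psi_0.
\end{align*}
The first line is controlled by $C_3\|\psi_1-\psi_0\|_a$ via the mean value theorem combined with the bound $|\lambda_i(\psi_j)|\le C_3$ from Lemma \ref{lem:bd lambda}. It therefore remains to show $|\lambda_i(\psi_1)-\lambda_i(\psi_0)|\le C\|\psi_1-\psi_0\|_a$ for $i=1,2$. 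From (\ref{eq:lambda-phi-hingedBC}) each $\lambda_i$ is a polynomial expression in the five integrals $\int_0^L\cos^2\psi\,ds$, $\int_0^L\sin^2\psi\,ds$, $\int_0^L\sin\psi\cos\psi\,ds$, $\int_0^L(\partial_s\psi)^2\cos\psi\,ds$, $\int_0^L(\partial_s\psi)^2\sin\psi\,ds$, divided by $\det A_T$. Each of these five integrals is a Lipschitz functional on $\mathfrak{B}_a^L(d,M)$: the factors $\cos\psi,\sin\psi$ contribute Lipschitz constants $\le 1$ via the mean value theorem, while $(\partial_s\psi)^2$ is Lipschitz with constant $\le 2M$ through $a^2-b^2=(a+b)(a-b)$ together with $|\partial_s\psi_j|\le M$. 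Lemma \ref{lem:bd lambda} further supplies the uniform lower bound $\det A_T\ge C_2>0$, so $1/\det A_T$ is Lipschitz. The product and quotient rules then yield the required estimate on $\lambda_i$.

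For (ii), observe that $h(\varphi)(s,t)=\lambda_1(t)\sin\varphi(s,t)-\lambda_2(t)\cos\varphi(s,t)$ and that $\lambda_1(t),\lambda_2(t)$ carry no $s$-dependence, so
\[
\partial_s^j h(\varphi)=\lambda_1(t)\,\partial_s^j\sin\varphi-\lambda_2(t)\,\partial_s^j\cos\varphi.
\]
By Fa\`a di Bruno (or a direct induction) $\partial_s^j\sin\varphi$ and $\partial_s^j\cos\varphi$ are polynomials in $\partial_s\varphi,\ldots,\partial_s^j\varphi$ with coefficients $\sin\varphi$ and $\cos\varphi$, and the hypothesis $\partial_s^k\varphi\in C^0\cap L^\infty$ for $k\le m$ transfers to the same property for $\partial_s^j\sin\varphi$ and $\partial_s^j\cos\varphi$ with $j\le m$. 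Boundedness of $\lambda_i(t)$ comes from Lemma \ref{lem:bd lambda}, and continuity of $\lambda_i$ in $t$ follows from dominated convergence applied to the integrals in (\ref{eq:lambda-phi-hingedBC}), using the $t$-continuity of $\varphi$ and $\partial_s\varphi$. Multiplying out, $\partial_s^j h(\varphi)$ inherits both continuity and boundedness on $\mathbb R\times[0,a]$.

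The only real technical obstacle is the bookkeeping in the second paragraph: one has to simultaneously track Lipschitz constants for five integral functionals and then combine them through products and the quotient by $\det A_T$, relying critically on the uniform lower bound $\det A_T\ge C_2(d,M)>0$ from Lemma \ref{lem:bd lambda}. Everything else is a routine consequence of the uniform $C^1$-control built into the definition of $\mathfrak{B}_a^L(d,M)$.
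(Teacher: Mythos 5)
Your proof is correct and follows essentially the same route as the paper: part (i) is a telescoping decomposition of $h(\psi_1)-h(\psi_0)$ reduced to Lipschitz continuity of the multiplier functionals via the uniform bounds and the lower bound on $\det A_T$ from Lemma \ref{lem:bd lambda} (the paper organizes the telescoping around the vector expression $\bigl(\int_I(\partial_s\psi)^2T_\psi\,ds\bigr)A_{T_\psi}^{-1}$ rather than the scalar $\lambda_i$, and leaves the bookkeeping to the reader), and part (ii) uses exactly the paper's formula $\partial_s^j h(\varphi)=-\langle\vec\lambda(t),\partial_s^jT_\varphi^\perp\rangle$ in scalar form. Your added remarks on the $t$-continuity of $\lambda_i$ and the explicit Lipschitz constants only fill in details the paper omits.
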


\begin{proof}

$(i)$ 
From (\ref{eq:H}), we may write 
\[
h(\psi_\alpha)=
-\left<
\left(\int_I \text{ } (\partial_s\psi_\alpha)^2 T_{\psi_\alpha}\text{ }ds\right) 
A_{T_{\psi_\alpha}}^{-1}, 
T_{\psi_\alpha}^\perp
\right>
\text{.}
\] 
By using the formulae (\ref{eq:T=(cos,sin)}), (\ref{eq:matrix_A}), 
(\ref{eq:lambda-phi-Det}), (\ref{eq:lambda-phi-hingedBC}), 
and applying the assumption $\psi_0, \psi_1\in \mathfrak{B}_{a}^{L}(d,M)$, 
we obtain (\ref{esti:h}) from the following calculation 
\begin{align}
&\left|h(\psi_1)-h(\psi_0)\right| 
\notag
\\
&\le 
\left|
\left<
\left(\int_I \text{ } (\partial_s\psi_0)^2 T_{\psi_0}\text{ }ds\right) A_{T_{\psi_0}}^{-1}, 
T_{\psi_1}^\perp - T_{\psi_0}^\perp
\right>
\right|
\notag 
\\
& +
\left|
\left<
\left(\int_I \text{ } (\partial_s\psi_0)^2 T_{\psi_0}\text{ }ds\right) 
\left(A_{T_{\psi_1}}^{-1}-A_{T_{\psi_0}}^{-1}\right), 
T_{\psi_1}^\perp
\right>
\right|
\notag
\\ 
& +
\left|
\left<
\left(\int_I \text{ } (\partial_s\psi_0)^2 \cdot \left(T_{\psi_1}-T_{\psi_0}\right)\text{ }ds\right) 
A_{T_{\psi_1}}^{-1}, 
T_{\psi_1}^\perp
\right>
\right|
\notag
\\
& +
\left|
\left<
\left(\int_I \text{ } \left[(\partial_s\psi_1)^2 - (\partial_s\psi_0)^2\right]\cdot T_{\psi_1}\text{ }ds\right) 
A_{T_{\psi_1}}^{-1}, 
T_{\psi_1}^\perp
\right>
\right| 
\text{.}
\notag 
\end{align} 
We leave the details of the calculation to the reader for the sake of conciseness. 

$(ii)$ 
The conclusion $(ii)$ follows directly from the assumption and the formula 
\[
\partial_s^j h(\varphi)(s,t)=-\langle \vec\lambda(t), \partial_s^j T_{\varphi}^\perp (s,t)\rangle 
\text{,}
\] 
where $T_{\varphi}^\perp=(-\sin\varphi, \cos\varphi)$, $j\in\mathbb N\cup\{0\}$. 

\end{proof}

\begin{lem} \label{lem:contraction}
Assume 
$\varphi_0\in C^1(\mathbb R)\cap L^{\infty}(\mathbb R)$
with 
$\underset{\mathbb{R}}\sup \text{ } |\varphi^\prime_0|=M_0/2<\infty$ 
and 
$\underset{[0,L]}{\text{osc}} \text{ }\varphi_0= 2 d_0>0$. 

(i) There exists a positive number $b_0\in(0, a)$ 
so that the map given in (\ref{eq:H_phi^v}), 
\[
\mathcal{H}_{\varphi_0}: 
\{\psi\in\mathfrak{B}_{b}^{L}(d_0,M_0): 
\psi(s,0)=\varphi_0\}
\to
\{\psi\in\mathfrak{B}_{b}^{L}(d_0,M_0): \psi(s,0)=\varphi_0\}
\text{,}
\] 
is well-defined for all $0<b\le b_0$. 

(ii) 
There exists a positive time $t_0$ such that 
\[
\mathcal{H}_{\varphi_0}:
\{\psi\in\mathfrak{B}_{t_0}^{L}(d_0,M_0): \psi(s,0)=\varphi_0\}
\to
\{\psi\in\mathfrak{B}_{t_0}^{L}(d_0,M_0): \psi(s,0)=\varphi_0\}
\]
is a contraction map.
\end{lem}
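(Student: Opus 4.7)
The strategy is to exploit the splitting $\mathcal{H}_{\varphi_0}(\psi) = U_{\varphi_0} + H^\psi_0$. The linear heat-semigroup part $U_{\varphi_0}$ is independent of $\psi$ and stays uniformly close to $\varphi_0$ (with its derivative close to $\varphi_0'$) for small $t$, while the nonlinear correction $H^\psi_0$ can be made uniformly small by shrinking the time horizon. Lemmas \ref{lem:bd lambda} and \ref{lem:h} provide a uniform bound $|h(\psi)| \le C_3(d_0, M_0)$ and a Lipschitz estimate $|h(\psi_1) - h(\psi_0)| \le C_4 \|\psi_1 - \psi_0\|_a$ on the ball $\mathfrak{B}_a^L(d_0, M_0)$, and Lemma \ref{lem:19.2.1_Cannon84} converts these into small-time bounds on $H^\psi_0$. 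The main obstacle is the competing tolerances in part (i): Lemma \ref{lem:bd lambda} only applies while both the oscillation lower bound $d_0$ and the derivative upper bound $M_0$ persist, so preserving these under $\mathcal{H}_{\varphi_0}$ is exactly what closes the self-map argument. This is why the hypothesis is stated with $M_0/2$ and $2d_0$ rather than $M_0$ and $d_0$; the slack must absorb both the small drift of $U_{\varphi_0}$ away from $\varphi_0$ and the $O(b^{1/2})$ size of $H^\psi_0$.

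For part (i), note first that $\mathcal{H}_{\varphi_0}(\psi)(s, 0) = \varphi_0(s)$ is automatic. Fix $\psi \in \mathfrak{B}_b^L(d_0, M_0)$ with $\psi(s, 0) = \varphi_0$. Applying Lemma \ref{lem:bd lambda} gives $|h(\psi)| \le C_3$, and then Lemma \ref{lem:19.2.1_Cannon84}(3)--(4) yield
\[
\sup_{D_b}|H^\psi_0| \le C_3\, b, \qquad \sup_{D_b}|\partial_s H^\psi_0| \le 2\pi^{-1/2} C_3\, b^{1/2}.
\]
Standard heat-kernel regularity, together with $\varphi_0 \in C^1(\mathbb{R}) \cap L^\infty(\mathbb{R})$, $|\varphi_0'| \le M_0/2$ and $\text{osc}_{[0,L]}\varphi_0 = 2 d_0$, implies that $\partial_s U_{\varphi_0}(\cdot, t) \to \varphi_0'$ and $U_{\varphi_0}(\cdot, t) \to \varphi_0$ uniformly on $[0, L]$ as $t \to 0^+$. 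Choose $b_0$ small enough that $\sup_{D_{b_0}^L}|\partial_s U_{\varphi_0}| \le \tfrac{3}{4} M_0$, $\inf_{t \in [0, b_0)} \text{osc}_{[0,L] \times \{t\}}(U_{\varphi_0}) \ge \tfrac{3}{2} d_0$, and $C_3 (b_0 + 2\pi^{-1/2} b_0^{1/2}) \le \min\{M_0/4,\, d_0/4\}$. The elementary bounds $\text{osc}(f + g) \ge \text{osc}(f) - 2\sup|g|$ and $|\partial_s(f + g)| \le |\partial_s f| + |\partial_s g|$ then give $\mathcal{H}_{\varphi_0}(\psi) \in \mathfrak{B}_b^L(d_0, M_0)$ for every $b \le b_0$.

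For part (ii), the difference $\mathcal{H}_{\varphi_0}(\psi_1) - \mathcal{H}_{\varphi_0}(\psi_0) = H^{\psi_1}_0 - H^{\psi_0}_0$ solves the heat equation on $\mathbb{R} \times (0, t_0)$ with source $h(\psi_1) - h(\psi_0)$ and zero initial data. Using Lemma \ref{lem:h}(i) to bound the source by $C_4 \|\psi_1 - \psi_0\|_{t_0}$ and applying Lemma \ref{lem:19.2.1_Cannon84}(3)--(4) yields
\[
\|\mathcal{H}_{\varphi_0}(\psi_1) - \mathcal{H}_{\varphi_0}(\psi_0)\|_{t_0} \le C_4 \bigl(t_0 + 2\pi^{-1/2} t_0^{1/2}\bigr) \|\psi_1 - \psi_0\|_{t_0}.
\]
Taking $t_0 \le b_0$ small enough that $C_4 (t_0 + 2\pi^{-1/2} t_0^{1/2}) < 1$ produces the desired contraction constant and completes the proof.
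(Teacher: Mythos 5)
Your proposal is correct and follows essentially the same route as the paper: the same splitting $\mathcal{H}_{\varphi_0}(\psi)=U_{\varphi_0}+H_0^{\psi}$, the uniform bound $|h(\psi)|\le C_3$ from Lemma \ref{lem:bd lambda} combined with the kernel estimates to make $H_0^{\psi}$ and $\partial_s H_0^{\psi}$ of size $O(b)$ and $O(\sqrt{b})$, and the identical contraction estimate $C_4\bigl(t_0+2\pi^{-1/2}\sqrt{t_0}\bigr)<1$ for part (ii). The only cosmetic difference is that the paper bounds $|\partial_s U_{\varphi_0}|\le M_0/2$ for all $t\ge 0$ directly from $\int K=1$, whereas you allow the slightly weaker $\tfrac34 M_0$ via small-time uniform convergence; both close the self-map argument.
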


\begin{proof}
To prove $(i)$, we need to verify that there exists a constant $b_0>0$ such that, 
for all $b\in(0,b_0)$ and 
$\psi\in\{\varphi\in\mathfrak{B}_{b}^{L}(d_0,M_0): \varphi(s,0)=\varphi_0\}$, 
\begin{align}
|\partial_s H_{\varphi_0}^\psi|\leq M_0~\mbox{in}~  D_b
~~\mbox{, }~~ 
\underset{t\in[0,b)}\inf \underset{[0,L]\times\{t\}}{\text{osc}} 
(H_{\varphi_0}^\psi)\geq d_0  
\text{.}
\notag 
\end{align}
Since $\varphi_0$ is bounded and continuous on $\mathbb R$, 
from the standard argument on the initial value problem for the heat equation on 
$\mathbb R$ 
(e.g., see Chapter $3$ of \cite{Cannon84}), the function 
$U_{\varphi_0}$, given in (\ref{eq:U_varphi_0}), belongs to the class of 
$C^{\infty}(\mathbb R\times(0,\infty))\cap C^0(\mathbb R\times[0,\infty))$. 
Observe that 
\begin{align}
\partial_s U_{\varphi_0}(s,t)
&=
\int_{-\infty}^{\infty}\partial_s K(s-\xi,t)\varphi_0 (\xi)\text{ }d\xi
=\int_{-\infty}^{\infty} -\partial_\xi K(s-\xi,t)\varphi_0 (\xi)\text{ }d\xi 
\notag
\\
&=\int_{-\infty}^{\infty} K(s-\xi,t)\varphi_0^\prime (\xi)\text{ }d\xi 
\text{,}
\label{eq:d_s U}
\end{align}
where the last equality comes from applying the assumption of $\varphi_0$ in (\ref{eq:varphi_0_C^infty}). 
Thus, using the same argument for $\partial_s U_{\varphi_0}$, 
$\partial_s U_{\varphi_0}$ also belongs to the class 
$C^{\infty}(\mathbb R\times(0,\infty))\cap C^0(\mathbb R\times[0,\infty))$. 
From (\ref{eq:d_s U}) and the assumption $\underset{\mathbb R}\sup\|\varphi_0^\prime\|\le M_0/2$ 
and the identity 
\begin{align}
\int_{-\infty}^{\infty} K(x,t)\text{ }dx=1 
\text{,}
\label{int:K}
\end{align} 
we conclude 
\begin{align}
|\partial_s U_{\varphi_0} (s,t)|\leq \frac{M_0}{2} 
\label{ineq:U-1}
\end{align}
for all $s\in\mathbb R$, $t\ge 0$. 
In fact, by applying this argument step by step, we obtain 
\begin{align}
\underset{t\rightarrow +0}\lim \partial_s^j U_{\varphi_0}(s,t)=\varphi_0^{(j)} (s) 
\label{eq:U_varphi=smooth}
\end{align} 
for all $s\in\mathbb R$, and any $j\in\mathbb N\cup\{0\}$.

To show the continuity of $H^\psi_0$ and $\partial_s H^\psi_0$ up to the parabolic boundary 
$\mathbb R\times\{0\}$, 
observe that, by applying Lemma \ref{lem:bd lambda} and (\ref{int:K}), 
these hold for all $s\in\mathbb R$ 
\begin{align}
|H^\psi_0 (s,t)|
\leq 
\int_0^t\int_{-\infty}^{\infty}|K(s-\xi,t-\tau)||h(\psi)(\xi,\tau)| \text{ }d\xi d\tau
\leq t \cdot C_3 
\text{.}
\label{eq:d_s^0(H)}
\end{align}
$\forall \text{ } s \in \mathbb R$. 
By applying the identity  
\begin{align}
\int_{-\infty}^{\infty}\partial_x K(x,t)\text{ }dx=\frac{-1}{\sqrt{\pi t}} 
\text{,}
\label{int:d_s(K)}
\end{align}
we obtain   
\begin{align}
|\partial_s H^\psi_0 (s,t)|
\leq\int_0^t\int_{-\infty}^{\infty}
|\partial_s K(s-\xi,t-\tau)||h(\psi)(\xi,\tau)| \text{ }d\xi d\tau
\leq 
2\frac{\sqrt{t}}{\sqrt{\pi}}\cdot C_3 
\text{.}
\label{eq:d_s(H)}
\end{align}
Thus, we conclude that for all $s \in\mathbb R $ 
\begin{align}
\underset{t\rightarrow +0}\lim H_{0}^{\psi}(s,t)=0  
~~\mbox{ and }~~
\underset{t\rightarrow +0}\lim \partial_s H_{0}^{\psi}(s,t)=0. 
\label{eq:H^psi--->0} 
\end{align}
Hence by  
(\ref{ineq:U-1}), (\ref{eq:d_s^0(H)}), (\ref{eq:d_s(H)}), (\ref{eq:H^psi--->0}), 
$H_{\varphi_0}^{\psi}$ and $\partial_s H_{\varphi_0}^{\psi}$ are continuous up to $\mathbb R\times\{0\}$. 
Therefore, we may choose a sufficiently small $\alpha>0$ 
such that 
\begin{align}
|\partial_s H_{\varphi_0}^{\psi}(s,t)| 
\leq M_0, ~~\forall\text{ } (s,t)\in D_{\alpha} 
\text{.}
\notag
\end{align}
Moreover, 
since $H_{\varphi_0}^{\psi}$ is continuous on the closure of $D_t^L$ 
(which is compact for any $t>0$), 
$\underset{t\rightarrow +0}\lim \text{ }H_{\varphi_0}^{\psi}(s,t)=\varphi_0(s)$, 
and $\underset{[0,L]}{\text{osc}}\text{ } \varphi_0 \ge 2 d_0$, 
we may choose a sufficiently small $\beta>0$ such that 
\begin{align}
\underset{t\in[0,\beta)}\inf \underset{[0,L]\times\{t\}}{\text{osc}} 
(H_{\varphi_0}^{\psi}) 
\geq d_0 
\text{.}
\notag
\end{align}
The conclusion $(i)$ is now obtained by letting $b_0:=\min\{\alpha, \beta\}$.

$(ii)$ 
From $(i)$, we let $t_0\in(0,b_0]$ below. 
By applying Lemma \ref{lem:h}, (\ref{int:K}), and (\ref{int:d_s(K)}), 
we obtain that for all $(s,t)\in\mathbb R\times(0,t_0)$ 
\begin{align}
&|H_{\varphi_0}^{\psi_1}(s,t)-H_{\varphi_0}^{\psi_0}(s,t)| 
\notag 
\\
\leq&\int_0^{t_0}\int_{-\infty}^{\infty}|K(s-\xi,t-\tau)| 
\cdot |h(\psi_1)(\xi,\tau)-h(\psi_0)(\xi,\tau)| \text{ }d\xi d\tau 
\notag 
\\ 
\leq& C_4 t_0 \left\|\psi_1-\psi_0\right\|_{t_0} 
\label{ineq:H1}
\end{align}
and
\begin{align}
&|\partial_s H_{\varphi_0}^{\psi_1}(s,t)
-\partial_s H_{\varphi_0}^{\psi_0}(s,t)| 
\notag 
\\
\leq&\int_0^{t_0}\int_{-\infty}^{\infty}|\partial_s K(s-\xi,t-\tau)| 
\cdot |h(\psi_1)(\xi,\tau)-h(\psi_0)(\xi,\tau)|
\text{ } d\xi d\tau 
\notag 
\\
\leq&2\frac{\sqrt{t_0}}{\sqrt{\pi}} C_4 \left\|\psi_1-\psi_0\right\|_{t_0}. 
\label{ineq:H2}
\end{align}
From (\ref{ineq:H1}) and (\ref{ineq:H2}), we deduce 
\begin{align}
\left\| H_{\varphi_0}^{\psi_1}- H_{\varphi_0}^{\psi_0} \right\|_{t_0}
\leq C_4\left(t_0+2\frac{\sqrt{t_0}}{\sqrt{\pi}}\right) 
\left\|\psi_1-\psi_0\right\|_{t_0}. 
\notag
\end{align}
Thus, conclusion $(ii)$ is obtained by choosing a small $t_0\le b_0$ such that 
\begin{align}
C_4\left(t_0+2\frac{\sqrt{t_0}}{\sqrt{\pi}}\right)<1 
\text{.}
\notag
\end{align}
\end{proof}

\begin{proof}[Proof of Proposition \ref{prop:Short-T.E.}] 

There are two steps in the proof. 

Step $1$: existence of classical solutions and the integral representation. 

Define the sequence of functions
\begin{align}
\psi_n=
\begin{cases}
\varphi_0    &n=0, 
\\
\mathcal{H}_{\varphi_0}(\psi_{n-1})  &n\ge 1, 
\end{cases}
\label{eq:iteration_seq}
\end{align}
and $M_0$, $d_0$ are given as before. 
From Lemma \ref{lem:contraction}, 
the sequence \{$\psi_n$\} is a Cauchy sequence in 
$\mathfrak{B}_{t_0}^{L}(d_0,M_0)$, 
which is a closed subset in the Banach space 
$\mathfrak{B}_{t_0}$. 
Thus, there exists a unique function,
$\varphi\in\mathfrak{B}_{t_0}^{L}(d_0,M_0)$, such that
$\psi_n\to\varphi$ in the topology of 
$(\mathfrak{B}_{t_0},\left\|\cdot\right\|_{t_0})$. 
On the other hand, for each $n\ge 0$, the function 
$\psi_{n+1}$ satisfies the following equation
\[
\begin{cases}
\partial_t \psi_{n+1}=\partial^2_s \psi_{n+1}+h(\psi_n)
~&\mbox{for}~(s,t)\in D_{t_0},
\\
\psi_{n+1}(s,0)=\varphi_0(s) &\mbox{for}~  s\in\mathbb R.
\end{cases}
\]
Therefore, as $n\rightarrow\infty$, the function $\varphi$ satisfies
equation (\ref{eq:grad.flow.G}) and can be written as
\begin{align}
\varphi(s,t)&=U_{\varphi_0}(s,t)+H_0^\varphi (s,t) 
\notag 
\\
&=U_{\varphi_0}(s,t)+
\int^t_0\int_{-\infty}^{\infty}K(s-\xi,t-\tau)h(\varphi)(\xi,\tau)\text{ }d\xi d\tau 
\text{.}
\label{eq:integral_formula_soln}
\end{align}

Step $2$: smoothness of the right hand side of (\ref{eq:integral_formula_soln}). 



From the regularity assumption of $\varphi_0$ on (\ref{eq:varphi_0_C^infty}) 
and applying integration by parts inductively, 
one obtains 
\[
\partial_s^\ell U_{\varphi_0}(s,t)=\int_{-\infty}^{\infty}K(s-\xi,t)\cdot \varphi_0^{(\ell)}(\xi)
\text{ }d\xi 
\]
for all $\ell\in\mathbb N$. 
Thus, we conclude 
\begin{align}
\partial_s^\ell U_{\varphi_0}\in C^0 (\mathbb R\times [0,+\infty)), 
~~\forall \text{ }\ell\in\mathbb N\cup\{0\}
\text{,}
\label{eq:U_varphi=smooth-1} 
\end{align}
by the same argument originally applied to $U_{\varphi_0}$ before. 

To finish the proof, we only need to work with the last term of 
(\ref{eq:integral_formula_soln}). 
First note that from Lemma \ref{lem:h} $(ii)$ 
the regularity of $h(\varphi)$ is the same as that of $\varphi$. 
Since $\varphi\in \mathfrak{B}_{t_0}$, 
$h(\varphi)$ also belongs to $\mathfrak{B}_{t_0}$. 
Thus, by applying Lemma \ref{lem:19.2.1_Cannon84}, 
$\partial_s^\ell H_0^\varphi\in C^0 (\mathbb R\times[0,t_0])$, 
for each $\ell=\{0,1,2\}$. 
Together with (\ref{eq:U_varphi=smooth-1}), we have that 
$\partial_s^\ell H_{\varphi_0}^\varphi$ 
(i.e., $\partial_s^\ell \varphi$) belongs to $C^0 (\mathbb R\times[0,t_0])$, 
$\forall\text{ }\ell=\{0,1,2\}$. 
By applying Lemma \ref{lem:h} $(ii)$ again, $h(\varphi)$ now also fulfills 
$\partial_s^\ell h(\varphi)\in C^0 (\mathbb R\times[0,t_0])$, 
for each $\ell=\{0,1,2\}$.  
Now,  
by applying the formula, 
\begin{align}
\partial_s^\ell H_0^\varphi (s,t)
=&\int_0^t\int_{-\infty}^{\infty}\partial_s K(s-\xi,t-\tau)\cdot \partial_\xi^{\ell-1} h(\varphi)(\xi,\tau)\text{ }d\xi d\tau 
\notag
\\
=&\int_0^t\int_{-\infty}^{\infty} K(s-\xi,t-\tau)\cdot \partial_\xi^\ell h(\varphi)(\xi,\tau)
\text{ }d\xi d\tau 
\text{,}
\notag 
\end{align}
Lemma \ref{lem:19.2.1_Cannon84}, and Lemma \ref{lem:h}, 
we prove inductively that 
\begin{align}
\partial_s^\ell H_0^\varphi\in C^0 (\mathbb R\times[0,t_0]), 
~~ \forall \text{ } \ell\in\mathbb N\cup\{0\}
\label{eq:H_0^varphi=smooth-1}
\end{align} 
by a boot-strapping argument. 

The proof is finished by combining (\ref{eq:integral_formula_soln}), 
(\ref{eq:U_varphi=smooth-1}), and (\ref{eq:H_0^varphi=smooth-1}). 
\end{proof}

\begin{proof}[Proof of Theorem \ref{thm:STE_for_N}] 

We may extend $\varphi_0$ as an even function from $[0,L]$ to $[-L,L]$. 
Then we extend $\varphi_0$ as a periodic function with period $2L$ from $[-L,L]$ 
to the whole real line $\mathbb R$.
Denote this extended function by $\widetilde{\varphi}_0$. 
From the assumption of vanishing of derivatives of $\varphi_0$ at the end points, 
we deduce that there exists a bounded sequence in $\ell$ of numbers 
$\{M_\ell\}$, $\ell\in\{0,1,2,...\}$, such that 
$\widetilde{\varphi}_0\in C^\infty(\mathbb R)$ and it derivatives fulfill 
$\underset{\mathbb R}\sup\left\{|\widetilde{\varphi}_0^{(\ell)}|\right\}\le M_\ell$ 
$\forall\text{ }\ell\in\mathbb N\cup\{0\}$. 
Therefore, from Proposition \ref{prop:Short-T.E.}, 
there is a positive number $t_0$ and a $C^\infty$-smooth function
$\varphi$ satisfying
\begin{align}
\begin{cases}
\frac{\partial}{\partial t}\varphi(s,t)
=\frac{\partial^2}{\partial s^2}\varphi(s,t)
+\lambda_1(t) \sin\varphi(s,t)-\lambda_2(t)\cos\varphi(s,t)
~~&\mbox{in}~~D_{t_0}, 
\\
\varphi(s,0)=\widetilde{\varphi}_0(s) &\mbox{on}~~\mathbb R\times\{0\}.
\end{cases}\notag
\end{align}
Since $\widetilde{\varphi}_0(s)$ is the extension of $\varphi_0(s)$, 
this $\varphi$ fulfills 
the first two equations in (\ref{eq:grad.flow.H}).

To finish the proof, we only need to show that this $\varphi$ also 
fulfills the boundary conditions in 
(\ref{eq:grad.flow.H}), 
i.e., 
\begin{align}
\partial_s\varphi(b,t)=k(b,t)=0, \forall\text{ }b\in\partial I, t\in (0,t_0) 
\text{.}
\label{eq:k(b,t)=0}
\end{align} 
We prove (\ref{eq:k(b,t)=0}) by showing that each term in 
$\{h(\psi_n)\}_{n\in\mathbb N\cup\{0\}}$ 
is an even function of $s$ with period $2L$, 
where 
$\{\psi_n\}_{n\in\mathbb N\cup\{0\}}$ 
the iteration sequence in (\ref{eq:iteration_seq}). 
Note that $\{\psi_n\}_{n\in\mathbb N\cup\{0\}}$ 
is a Cauchy sequence, which converges to $\varphi$ 
in the Banach space $(\mathfrak{B}_{t_0},||\cdot||_{t_0})$. 
Since for a fixed $t$, 
$\psi_0(s,t)=\widetilde{\varphi}_0(s)$ is an even function of $s$ with period $2L$, 
from the formulae (\ref{eq:U_varphi_0}) and (\ref{eq:H^varphi}), 
it is easy to verify that $\psi_1$ is also an even function of $s$ with period $2L$. 
Thus, by an induction argument, we conclude that every function in the sequence 
$\{\psi_n\}$ is an even function of $s$ with period $2L$. 
Since $\{\psi_n\}$ converges to $\varphi$ in the 
Banach space $(\mathfrak{B}_{t_0},||\cdot||_{t_0})$, 
we conclude that 
$\varphi\in\mathfrak{B}_{t_0}$ is also an even function of $s$ with period $2L$. 
Now, the vanishing condition in (\ref{eq:k(b,t)=0}) is a result of 
this property of $\varphi$.
In other words, for fixed $t\in(0,t_0)$, 
as $\varphi(\cdot,t)\in C^1(\mathbb R)$ is an even function with period $2L$, 
then $\partial_s \varphi(0,t)=0=\partial_s \varphi(L,t)$. 


\end{proof}

\subsection{Long time existence and asymptotics}

\begin{lem}
\label{lem:nabla_s^2m(kappa)=0_at_bdry}
Assume the curvature vector $\kappa$ remains smooth during the $L^2$-flow 
(\ref{eq:flow-T}) up to $t=t_0>0$. 
Then, for all $t\in(0,t_0)$ and $\ell\in\mathbb N\cup\{0\}$, 
\begin{equation*}
\partial_s^{2\ell}\kappa (b, t)=0=\nabla_s^{2\ell}\kappa (b, t), 
\text{ } \forall \text{ }b\in\partial I, \text{ } 
\forall 
\text{ }
t\in[0,t_0) 
\text{.}
\end{equation*} 
\end{lem}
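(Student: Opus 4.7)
\emph{Plan of proof.} The strategy is to reduce both identities to a single parity statement about $\varphi$ and to prove it via the scalar evolution equation (\ref{eq:flow-k}). Since $T=(\cos\varphi,\sin\varphi)$ and $\kappa=k\,T^{\perp}$ with $k=\partial_{s}\varphi$, the relation $\nabla_{s}T^{\perp}=0$ gives $\nabla_{s}^{\ell}\kappa=\partial_{s}^{\ell}k\cdot T^{\perp}$; in particular, $\nabla_{s}^{2\ell}\kappa=0$ on $\partial I$ is equivalent to $\partial_{s}^{2\ell}k=0$ on $\partial I$, and the hinged boundary condition $\kappa|_{\partial I}=0$ reads $k\equiv 0$ on $\partial I\times[0,t_{0})$.

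The key intermediate step is to establish, by strong induction on $j\in\mathbb{N}\cup\{0\}$, that
\[
\partial_{s}^{2j+1}\varphi(b,t)=\partial_{s}^{2j}k(b,t)=0,\quad \forall\,b\in\partial I,\ \forall\,t\in[0,t_{0}).
\]
The base case $j=0$ is precisely the hinged boundary condition. For the inductive step, $\partial_{s}^{2(j-1)}k\equiv 0$ on the lateral boundary forces $\partial_{t}\partial_{s}^{2(j-1)}k=0$ there, so applying (\ref{eq:flow-k}) and exchanging $\partial_{t}$ with $\partial_{s}$ gives
\[
0=\partial_{s}^{2j}k+\partial_{s}^{2(j-1)}\!\bigl(\langle\vec\lambda,T\rangle\, k\bigr)\quad\text{on }\partial I.
\]
Each Leibniz summand $\binom{2(j-1)}{i}\bigl(\partial_{s}^{i}\langle\vec\lambda,T\rangle\bigr)\bigl(\partial_{s}^{2(j-1)-i}k\bigr)$ then vanishes on $\partial I$: for even $i$ the $k$-factor is killed by the inductive hypothesis, while for odd $i$ we write $\partial_{s}^{i}\langle\vec\lambda,T\rangle=\langle\vec\lambda,\partial_{s}^{i}T\rangle$ and invoke the Fa\`a di Bruno expansion of each component of $\partial_{s}^{i}(\cos\varphi,\sin\varphi)$: since $\sum k_{m}m_{k_{m}}=i$ is odd, every summand must contain at least one factor $\partial_{s}^{k_{m}}\varphi$ with $k_{m}$ odd and $k_{m}\le i\le 2j-2$, which vanishes on $\partial I$ by the inductive hypothesis.

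Once the intermediate claim is in place, $\nabla_{s}^{2\ell}\kappa=\partial_{s}^{2\ell}k\cdot T^{\perp}=0$ on $\partial I$ is immediate, and $\partial_{s}^{2\ell}\kappa=\partial_{s}^{2\ell+1}T=0$ on $\partial I$ follows by applying the same Fa\`a di Bruno parity argument to each component of $T$: the expansion of $\partial_{s}^{2\ell+1}\cos\varphi$ and $\partial_{s}^{2\ell+1}\sin\varphi$ necessarily contains an odd-order factor $\partial_{s}^{k_{m}}\varphi$, which has just been shown to vanish at $\partial I$. The delicate point to manage is the nested use of Fa\`a di Bruno inside the induction; circularity is avoided because the orders of $\varphi$ needed at step $j$ are at most $2j-1$, which are already controlled by the inductive hypothesis.
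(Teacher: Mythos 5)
Your proof is correct, and the core mechanism coincides with the paper's: induct on the order, note that the lower-order boundary identity holds for all $t$ so its time derivative vanishes at $\partial I$, use the evolution equation to trade that time derivative for $\partial_s^{2}$ plus lower-order terms, and kill those terms by a parity count. The organization, however, is genuinely different. The paper stays at the level of the vector curvature and runs two separate inductions: part $(i)$ inducts on $\partial_s^{2\ell}\kappa$ using the full expansion of $\partial_t\kappa$, which drags along the extra tangential terms $|\kappa|^2\kappa$, $2\langle\partial_s\kappa,\kappa\rangle T$, $\langle\vec\lambda,\kappa\rangle T$ coming from the gap between $\partial_s$ and $\nabla_s$, and so produces four sums whose vanishing at $\partial I$ rests on implicit parity checks; part $(ii)$ then runs a second induction on $\nabla_s^{2\ell}\kappa$ using (\ref{eq:flow-kappa}) together with the output of part $(i)$. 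You instead reduce everything to the single scalar equation (\ref{eq:flow-k}) for $k=\partial_s\varphi$, prove $\partial_s^{2j}k|_{\partial I}=0$ by one strong induction whose only remainder is $\partial_s^{2(j-1)}(\langle\vec\lambda,T\rangle k)$, and then recover both vector conclusions as corollaries, $\nabla_s^{2\ell}\kappa=\partial_s^{2\ell}k\cdot T^{\perp}$ being immediate and $\partial_s^{2\ell}\kappa=\partial_s^{2\ell+1}T$ handled by the Fa\`a di Bruno parity observation (an odd total order forces an odd-order factor $\partial_s^{m}\varphi=\partial_s^{m-1}k$ of strictly lower even order). Your version buys a leaner computation and makes the parity bookkeeping fully explicit, at the cost of invoking Fa\`a di Bruno where the paper only needs Leibniz; the nesting you flag is indeed harmless because the odd orders of $\varphi$ consumed at step $j$ are at most $2j-1$ and hence already covered by the inductive hypothesis.
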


\begin{proof}

$(i) \text{ }\text{ to show }\partial_s^{2\ell}\kappa (b, \cdot)=0$:  

From a direct computation, we have 
\[
\partial_s^{2}\kappa
=\nabla_s^2\kappa -3\langle \nabla_s\kappa, \kappa \rangle T 
-|\kappa|^2\kappa 
\text{.}
\] 
Together with the hinged boundary condition (\ref{eq:bc-hinged}), 
we derive $\partial_s^{2}\kappa(b, t)=0$, $\forall \text{ }t\in[0,t_0)$. 
To show $\partial_s^{2\ell}\kappa (b, t)=0$, 
$\forall \text{ }t\in[0,t_0), \forall\text{ }\ell\ge 2$, 
we argue inductively by the following calculation:  
\begin{equation*}
\begin{array}{l}
\partial_t \partial_s^{2\ell-2}\kappa 
= \partial_s^{2\ell-2}\partial_t \kappa 
= \partial_s^{2\ell-2}(\partial_s^2 \kappa 
+ \langle \vec\lambda, T \rangle \kappa
+ |\kappa|^2\kappa
+2\langle \partial_s\kappa, \kappa \rangle T
+\langle \vec\lambda, \kappa \rangle T) 
\\ \\ 
= \partial_s^{2\ell} \kappa 
+\partial_s^{2\ell-2}
\left(
\langle \vec\lambda, T \rangle \kappa+|\kappa|^2\kappa
+2\langle \partial_s\kappa, \kappa \rangle\cdot T
+\langle \vec\lambda, \kappa \rangle\cdot T
\right) 
\\ \\ 
= \partial_s^{2\ell} \kappa 
+\underset{i+j=2\ell-2}\sum C_{1}(i,j) 
\langle \vec\lambda, \partial_s^{i}T \rangle \cdot \partial_s^{j} \kappa 
+\underset{i+j+k=2\ell-2}\sum C_{2}(i,j,k) 
\langle\partial_s^{i}\kappa, \partial_s^{j}\kappa\rangle\cdot\partial_s^{k}\kappa 
\\ \\
+\underset{i+j+k=2\ell-2}\sum C_{3}(i,j,k) 
\langle \partial_s^{i+1}\kappa, \partial_s^{j}\kappa \rangle\cdot \partial_s^{k}T
+\underset{i+j=2\ell-2}\sum C_{4}(i,j) 
\langle \vec\lambda, \partial_s^{i}\kappa \rangle\cdot \partial_s^{j}T 
\text{,}
\end{array}
\end{equation*}
where (\ref{eq:flow-T}) is applied to derive the second equality with constants 
$C_{1}(i,j)$, $C_{2}(i,j,k)$, $C_{3}(i,j,k)$, $C_{4}(i,j)$.  
By applying the assumption 
\[
\partial_s^{2i}\kappa(b, t)=0, 
\text{ } 
\forall 
\text{ }
i\in\{0, 1,..., \ell-1\}, 
\text{ } 
\forall 
\text{ }
b\in\partial I, 
\text{ } 
\forall 
\text{ }
t\in[0,t_0) 
\text{,}
\]  
the term $\partial_t \partial_s^{2\ell-2}\kappa$ 
and the last four terms on the R.H.S. all vanish. 
This gives 
$\partial_s^{2\ell}\kappa(b, \cdot)=0, \text{ } \forall \text{ }b\in\partial I$, 
and thus completes the proof of the first part.

$(ii) \text{ }\text{ to show }\nabla_s^{2\ell}\kappa (b, \cdot)=0$:  

Applying the hinged boundary condition (\ref{eq:bc-hinged}) to 
(\ref{eq:flow-kappa}) gives 
\[
\nabla_s^2\kappa(b, \cdot)=0, \text{ } \forall \text{ }b\in\partial I 
\text{.}
\] 
To show $\nabla_s^{2\ell}\kappa (b, \cdot)=0, \forall\text{ }\ell\ge 2$, 
we argue inductively from applying the result, 
$\partial_s^{2\ell}\kappa (b, \cdot)=\partial_s^{2\ell+1}T (b, \cdot)=0$, 
in part $1^\circ$ 
and the following calculation  
\begin{equation*}
\begin{array}{l}
\nabla_t \nabla_s^{2\ell-2}\kappa = \nabla_s^{2\ell-2}\nabla_t \kappa 
= \nabla_s^{2\ell-2}(\nabla_s^2 \kappa +\langle \vec\lambda, T \rangle \kappa)
= \nabla_s^{2\ell} \kappa 
+\nabla_s^{2\ell-2}(\langle \vec\lambda, T \rangle \kappa) 
\\ \\ 
= \nabla_s^{2\ell} \kappa 
+\underset{i+j=2\ell-2}{\sum}
C_{5}(i, j)\langle \vec\lambda, \partial_s^{i}T \rangle\cdot\nabla_s^{j}\kappa 
\text{,}
\end{array}
\end{equation*}
where $C_{5}(i, j)$ is a constant.

\end{proof}

\begin{lem}
\label{lem:induction-2}
Assume the curvature vector $\kappa$ remains smooth during the $L^2$-flow 
(\ref{eq:flow-T}) up to $t=t_0>0$. 
Then, $\forall \text{ } t\in(0,t_0)$ and $\forall\text{ }m\in\mathbb N$, 
\begin{equation*}
\frac{d}{dt}\frac{1}{2}
\underset{I}{\int}\text{ }|\nabla_s^m\kappa|^2\text{ }ds
+ \underset{I}{\int}\text{ }|\nabla_s^{m+1}\kappa|^2\text{ }ds 
= 
-\underset{I}{\int}\text{ }
\langle
\nabla_s^{m+1}\kappa, 
\nabla_s^{m-1}(\langle \vec\lambda, T\rangle\cdot\kappa) 
\rangle
\text{ }ds 
\text{.}
\end{equation*}
\end{lem}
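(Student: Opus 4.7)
The plan is to differentiate under the integral in $t$, commute $\nabla_t$ past the iterated $\nabla_s^m$ via Lemma~\ref{lem:Lin_Lemma8-like}, substitute the evolution equation~(\ref{eq:flow-kappa}) for $\kappa$, integrate by parts once with respect to $s$, and then kill the resulting boundary contribution using Lemma~\ref{lem:nabla_s^2m(kappa)=0_at_bdry}.

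Concretely, I would start from
\[
\frac{d}{dt}\frac{1}{2}\int_I |\nabla_s^m \kappa|^2\,ds = \int_I \langle \nabla_t \nabla_s^m \kappa,\, \nabla_s^m \kappa\rangle\,ds,
\]
iterate~(\ref{eq:Lin_Lemma8-like-1}) to obtain $\nabla_t \nabla_s^m \kappa = \nabla_s^m \nabla_t \kappa$, and insert~(\ref{eq:flow-kappa}) to get
\[
\nabla_t \nabla_s^m \kappa = \nabla_s^{m+2}\kappa + \nabla_s^m(\langle\vec\lambda, T\rangle \kappa) = \nabla_s\bigl(\nabla_s^{m+1}\kappa + \nabla_s^{m-1}(\langle\vec\lambda,T\rangle\kappa)\bigr).
\]
Using the identity $\partial_s \langle \phi,\psi\rangle = \langle \nabla_s\phi,\psi\rangle + \langle \phi,\nabla_s\psi\rangle$ for normal vector fields $\phi, \psi$ along $f$, one integration by parts produces
\[
\int_I \langle \nabla_t\nabla_s^m \kappa,\, \nabla_s^m\kappa\rangle\,ds = -\int_I |\nabla_s^{m+1}\kappa|^2\,ds - \int_I \langle \nabla_s^{m-1}(\langle\vec\lambda,T\rangle\kappa),\, \nabla_s^{m+1}\kappa\rangle\,ds + B,
\]
where the boundary contribution can be compactly written as
\[
B = \bigl[\,\langle \nabla_s^{m+1}\kappa + \nabla_s^{m-1}(\langle\vec\lambda,T\rangle\kappa),\, \nabla_s^m \kappa\rangle\,\bigr]_{\partial I} = \bigl[\,\langle \nabla_t \nabla_s^{m-1}\kappa,\, \nabla_s^m\kappa\rangle\,\bigr]_{\partial I},
\]
the last equality using once more $\nabla_s^{m+1}\kappa + \nabla_s^{m-1}(\langle\vec\lambda,T\rangle\kappa) = \nabla_s^{m-1}\nabla_t\kappa = \nabla_t\nabla_s^{m-1}\kappa$.

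To conclude, I would verify $B=0$ by parity of $m$: if $m$ is even, Lemma~\ref{lem:nabla_s^2m(kappa)=0_at_bdry} directly gives $\nabla_s^m\kappa(b,t)\equiv 0$ on $\partial I$; if $m$ is odd, the same lemma yields $\nabla_s^{m-1}\kappa(b,t)\equiv 0$ as a function of $t$, so that its time derivative $\nabla_t \nabla_s^{m-1}\kappa(b,t)$ also vanishes identically. Rearranging then gives precisely the stated identity.

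The step I expect to be the main obstacle is the treatment of $B$ in the odd-$m$ case, since neither factor $\nabla_s^{m+1}\kappa(b,t)$ nor $\nabla_s^{m-1}(\langle\vec\lambda,T\rangle\kappa)(b,t)$ vanishes individually at the boundary. The decisive trick is to \emph{combine} the two boundary pieces before arguing, recognize their sum as $\nabla_t\nabla_s^{m-1}\kappa$, and then transfer the even-order vanishing from Lemma~\ref{lem:nabla_s^2m(kappa)=0_at_bdry} through the time derivative; without this algebraic regrouping the boundary term does not obviously disappear for odd $m$.
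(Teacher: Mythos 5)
Your proposal is correct and follows essentially the same route as the paper: commute $\nabla_t$ with $\nabla_s$ via (\ref{eq:Lin_Lemma8-like-1}), insert the evolution equation (\ref{eq:flow-kappa}), integrate by parts once, and kill the boundary term by parity using Lemma \ref{lem:nabla_s^2m(kappa)=0_at_bdry} --- in the odd-$m$ case by recognizing the boundary factor as $\nabla_t\nabla_s^{m-1}\kappa$, whose vanishing follows because $\nabla_s^{m-1}\kappa$ vanishes identically in $t$ on $\partial I$. The paper merely organizes this differently (it splits into cases $m=2\ell$ and $m=2\ell+1$ and keeps $\nabla_t$ in the inner position until after the integration by parts), but the key ideas and the decisive boundary argument are the same.
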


\begin{proof}
In the argument below, 
we apply Lemma \ref{lem:nabla_s^2m(kappa)=0_at_bdry} and (\ref{eq:Lin_Lemma8-like-1}). 

Case $1 ~(\text{as } m=2\ell)$: 

\begin{equation*}
\begin{array}{l}
\frac{d}{dt}\frac{1}{2}
\underset{I}{\int}\text{ }|\nabla_s^{2\ell}\kappa|^2\text{ }ds 
= \underset{I}{\int}\text{ }
\langle
\nabla_s^{2\ell}\kappa, \nabla_t\nabla_s^{2\ell}\kappa 
\rangle
\text{ }ds 
= \underset{I}{\int}\text{ }
\langle
\nabla_s^{2\ell}\kappa, \nabla_s\nabla_t\nabla_s^{2\ell-1}\kappa 
\rangle
\text{ }ds 
\\ \\ 
= -\underset{I}{\int}\text{ }
\langle
\nabla_s^{2\ell+1}\kappa, \nabla_t\nabla_s^{2\ell-1}\kappa 
\rangle
\text{ }ds 
= -\underset{I}{\int}\text{ }
\langle
\nabla_s^{2\ell+1}\kappa, \nabla_s^{2\ell-1}\nabla_t\kappa 
\rangle
\text{ }ds
\\ \\
=
- \underset{I}{\int}\text{ }
\langle
\nabla_s^{2\ell+1}\kappa, 
\nabla_s^{2\ell-1}
(\nabla^2_s\kappa +\langle \vec\lambda, T\rangle\cdot\kappa) 
\rangle
\text{ }ds
\\ \\
=
-\underset{I}{\int}\text{ }
\langle
|\nabla_s^{2\ell+1}\kappa|^2 \text{ }ds
-\underset{I}{\int}\text{ }
\langle
\nabla_s^{2\ell+1}\kappa, 
\nabla_s^{2\ell-1}
(\langle \vec\lambda, T\rangle\cdot\kappa) 
\rangle
\text{ }ds 
\text{.}
\end{array}
\end{equation*}

Case $2 ~(\text{as } m=2\ell+1)$: 

Notice that, from Lemma \ref{lem:induction-2}, 
we have $\nabla_t\nabla^{2\ell}_s\kappa=0$ at the boundary. 
We obtain 
\begin{equation*}
\begin{array}{l}
\frac{d}{dt}\frac{1}{2}
\underset{I}{\int}\text{ }|\nabla_s^{2\ell+1}\kappa|^2\text{ }ds 
= \underset{I}{\int}\text{ }
\langle
\nabla_s^{2\ell+1}\kappa, \nabla_t\nabla_s^{2\ell+1}\kappa 
\rangle
\text{ }ds 
= \underset{I}{\int}\text{ }
\langle
\nabla_s^{2\ell+1}\kappa, \nabla_s\nabla_t\nabla_s^{2\ell}\kappa 
\rangle
\text{ }ds 
\\ \\ 
= -\underset{I}{\int}\text{ }
\langle
\nabla_s^{2\ell+2}\kappa, \nabla_t\nabla_s^{2\ell}\kappa 
\rangle
\text{ }ds 
= -\underset{I}{\int}\text{ }
\langle
\nabla_s^{2\ell+2}\kappa, \nabla_s^{2\ell}\nabla_t\kappa 
\rangle
\text{ }ds
\\ \\
=
- \underset{I}{\int}\text{ }
\langle
\nabla_s^{2\ell+2}\kappa, 
\nabla_s^{2\ell}
(\nabla^2_s\kappa +\langle \vec\lambda, T\rangle\cdot\kappa) 
\rangle
\text{ }ds
\\ \\
=
-\underset{I}{\int}\text{ }
\langle
|\nabla_s^{2\ell+2}\kappa|^2 \text{ }ds
-\underset{I}{\int}\text{ }
\langle
\nabla_s^{2\ell+2}\kappa, 
\nabla_s^{2\ell}
(\langle \vec\lambda, T\rangle\cdot\kappa) 
\rangle
\text{ }ds 
\text{.}
\end{array}
\end{equation*}

\end{proof}

\begin{proof}[Proof of Theorem \ref{thm:main}]

The long time existence is derived by a contradiction argument. 
Namely, assume the solution fails to be $C^\infty$-smooth 
at $t=t_{0}>0$. 
Then, we show that the 
$L^{2} $-norm of the derivatives of the curvature of any order remain uniformly
bounded for any $t<t_{0}$. 
This gives the contradiction. 
In the end of proof, 
we show the asymptotic behaviour for a convergent 
subsequence of the solutions up to the translation. 

For any $m\in \mathbb N$, 
we will obtain the long time existence by deriving the following estimates for  curvature integrals, 
\begin{equation}
\frac{d}{dt}\underset{I}{\int }|\nabla_s^m\kappa|^2\text{ }ds
+\varepsilon \cdot \underset{I}{\int }|\nabla_s^{m+1}\kappa|^2\text{ }ds
\leq C(\left\| \kappa_{0}\right\| _{L^{2}}, L, |\triangle p|, m)
\text{,}
\label{eq:mainesti}
\end{equation}
where $\varepsilon >0$ is a sufficiently small constant. 
By Gronwall inequality, (\ref{eq:mainesti}) implies 
\begin{align}
&\int_I |\nabla_{s}^{m}\kappa|^2\text{ }ds 
\le C(\Vert \kappa_{0} \Vert_{L^2}, L, |\triangle p|, m) 
\text{ , } \forall \text{ }  t\in(0,t_0) \text{,}
\label{eq:uniform_bdd_nabla^m}
\end{align}
where $\kappa_0=\kappa(s,0)$ is the curvature vector of the initial curve $f_0$.

Note that (\ref{eq:energy_ID-1}) gives the non-increasing property of the energy 
$\mathcal{F}_{L,\triangle p}\left[ T \right]$. 
Thus, as long as the smooth solutions of the $L^2$-flow exist 
$\forall\text{ } t\in (0,t_0)$, 
one has 
$\mathcal{F}_{L,\triangle p}(T)\leq \mathcal{F}_{L,\triangle p}(T_0)$, 
where $T_0$ is the tangent vector of initial curves. 
Therefore, $\forall\text{ }t\in(0,t_0)$, 
$\Vert \kappa (\cdot,t)\Vert_{L^2}^2 
\leq 2 \mathcal{F}_{L,\triangle p}(T_0)$.

From Lemma \ref{lem:induction-2}, 
we have 
\begin{align}
&
\frac{d}{dt}\frac{1}{2}
\underset{I}{\int}\text{ }|\nabla_s^m\kappa|^2\text{ }ds
+ \underset{I}{\int}\text{ }|\nabla_s^{m+1}\kappa|^2\text{ }ds 
= 
-\underset{I}{\int}\text{ }
\langle
\nabla_s^{m+1}\kappa, 
\nabla_s^{m-1}(\langle \vec\lambda, T\rangle\cdot\kappa) 
\rangle
\text{ }ds 
\notag
\\
\le&
\left(\underset{I}{\int}\text{ }|\nabla_s^{m+1}\kappa|^2 \text{ }ds
\right)^{1/2}
\cdot
\left(\underset{I}{\int}\text{ }
|\nabla_s^{m-1}(\langle \vec\lambda, T\rangle\cdot\kappa)|^2
\text{ }ds
\right)^{1/2} 
\text{.}
\notag
\end{align}
From applying (\ref{eq:Lin_Lemma8-like-4}), 
the uniform bound of $|\vec\lambda|$ in Lemma \ref{lem:lambda}, 
and Gagliardo-Nirenberg inequalities to the equation 
\begin{align}
&
\nabla_s^{m-1}(\langle \vec\lambda, T\rangle\cdot\kappa)
=
\sum_{i+j=m-1}
C_{6}(i, j)\cdot\langle \vec\lambda, \partial_s^{i}T \rangle\cdot\nabla_s^{j}\kappa 
\text{,}
\notag
\end{align}
we derive 
\begin{align}
&
\left(\underset{I}{\int}\text{ }
|\nabla_s^{m-1}(\langle \vec\lambda, T\rangle\cdot\kappa)|^2
\text{ }ds 
\right)^{1/2}
\notag
\le&
C(\left\|\kappa_0 \right\| _{L^2}, L,|\triangle p|, m)\cdot 
\left\|\kappa \right\|_{m+1, 2}^{(m-1)/(m+1)} 
\text{.}
\notag
\end{align} 
For any positive integer $\ell\in\{1, 2,..., m+1\}$, 
we apply (\ref{eq:Lin_Lemma8-like-4}) and Gagliardo-Nirenberg inequalities 
again to derive 
\begin{equation}
\left|
\underset{I}{\int}\text{ }|\nabla_s^{\ell}\kappa|^2\text{ }ds
-
\underset{I}{\int}\text{ }|\partial_s^{\ell}\kappa|^2\text{ }ds
\right|
\le 
C(\left\|\kappa_0 \right\| _{L^2}, L,|\triangle p|, \ell)\cdot 
\left\|\kappa \right\|_{m+1, 2}^{\gamma} 
\label{eq:difference-(partial^m-nabla^m)^2}
\end{equation}
for some $\gamma\in(0,2)$. 
Thus, by applying Gagliardo-Nirenberg inequalities, we have 
\begin{align}
&
\frac{d}{dt}
\underset{I}{\int}\text{ }|\nabla_s^m\kappa|^2\text{ }ds
+ \underset{I}{\int}\text{ }|\partial_s^{m+1}\kappa|^2\text{ }ds 
+ \underset{I}{\int}\text{ }|\nabla_s^{m+1}\kappa|^2\text{ }ds 
\notag
\\
\le & 
\epsilon\cdot\left\|\kappa \right\|_{m+1, 2}^{2}
+
C(\epsilon, \left\|\kappa_0 \right\| _{L^2}, L,|\triangle p|, m) 
\notag
\end{align}
for some sufficiently small $\epsilon>0$. 
Note that the vanishing conditions of $\partial_s^{2\ell}\kappa$ 
in Lemma \ref{lem:nabla_s^2m(kappa)=0_at_bdry} 
allow us to apply the Poincare inequality  
\[
\underset{I}{\int}\text{ }|\partial_s^{m+1}\kappa|^2\text{ }ds 
\ge 
C(L) \cdot 
\underset{I}{\int}\text{ }|\partial_s^{m}\kappa|^2\text{ }ds
\] 
for some $C(L)>0$, $m\in\mathbb N\cup\{0\}$. 
Therefore, by applying (\ref{eq:difference-(partial^m-nabla^m)^2}) again to 
the case $\ell=m$ and choosing a sufficiently small $\epsilon>0$, 
we obtain 
\begin{align}
\frac{d}{dt}
\underset{I}{\int}\text{ }|\nabla_s^m\kappa|^2\text{ }ds
+\frac{C(L)}{2}\cdot\underset{I}{\int}\text{ }|\nabla_s^{m}\kappa|^2\text{ }ds 
\le 
C(\left\|\kappa_0 \right\|_{L^2}, L,|\triangle p|, m) 
\text{.}
\notag
\end{align}  
Finally we apply Gronwall inequality to derive from this differential inequality 
a uniform bound of $\left\|\nabla_s^{m}\kappa \right\|_{L^2}$ as presented in 
(\ref{eq:uniform_bdd_nabla^m}).

Notice that a planar curve can be uniquely determined by its signed curvature $k$, 
up to translation and rotation. 
Moreover, 
the formulae to connect the curvature vector $\kappa$ 
and signed curvature $k$ hold, i.e., 
\[
\nabla _{s}^{m}\kappa = \partial _{s}^{m}k \cdot T^\perp, 
~~ \forall \text{ } m\in\mathbb N\cup\{0\}
\text{.}
\] 
Besides, we have 
\begin{equation*}
\left\| \partial _{s}^{m-1}k\right\| _{L^{\infty }}
\leq C\cdot \left\|
\partial _{s}^{m}k\right\| _{L^{1}},
\forall \text{ } m\in \mathbb N
\text{.}
\end{equation*}
Therefore, 
$\left\|\partial_s^{m}k \right\|_{L^2}$ is also uniformly bounded for all 
$m\in \mathbb N$. 
Now the proof of the long time existence is completed 
by a contradiction argument.

To analyze the asymptotic behaviour of the flow, we choose a subsequence of
$T \left( t,\cdot \right)$, which converges smoothly to $T_{\infty }(\cdot)$ 
as $t\rightarrow \infty$. 
In addition, we rewrite the energy identity in (\ref{eq:energy_ID-1}) as  
\begin{equation}
u\left( t\right) :=\underset{I}{\int}\text{ }|\partial _{t}T |^{2}\text{ }ds
=-\frac{d}{dt}\mathcal{F}_{L,\triangle p}\left[T \right] 
\text{,}
\label{eq:energy_ID-3}
\end{equation}
which implies $u \in L^{1}\left( [0,\infty )\right)$. 
From differentiating (\ref{eq:energy_ID-3}) and applying 
(\ref{eq:flow-T}), (\ref{eq:uniform_bdd_nabla^m}), 
we obtain  
\begin{equation*}
\left| u^{\prime }\left( t\right) \right| 
\leq C\left( \left\| \kappa_0\right\|_{L^{2}}, L, |\triangle p| \right) 
\text{.}
\end{equation*}
Therefore, $u\left( t\right) \rightarrow 0$ as $t\rightarrow \infty$. 
In other words, $T_{\infty }(\cdot)$ is independent of $t$ 
and thus, by (\ref{eq:flow-T}), is an equilibrium configuration.
\end{proof}




{\bf Acknowledgement.} 
During working on this project, C.-C. L. would like to acknowledge the support from 
the National Science Council of Taiwan NSC 101-2115-M-003-002 and 
the National Center for Theoretical Sciences in Taipei, Taiwan.

\end{document}